\theoremstyle{plain}
\newtheorem{theorem}{Theorem}[section]
\newtheorem{lemma}[theorem]{Lemma}
\newtheorem{corollary}[theorem]{Corollary}
\newtheorem{proposition}[theorem]{Proposition}
\theoremstyle{definition}
\newtheorem{remark}[theorem]{Remark}
\newcommand{\iR}{\mathbb{R}}
\newcommand{\iZ}{\mathbb{Z}}
\newcommand{\R}{\mathbb{R}}
\begin{document}

\title{Li-Yau inequalities for general non-local diffusion equations via reduction to the heat kernel}
\date{\today}
\author{Frederic Weber}
\email{frederic.weber@uni-ulm.de}
\author{Rico Zacher$^*$}
\thanks{$^*$Corresponding author. F.\ W.\ is supported by a PhD-scholarship of the ``Studienstiftung des
deutschen Volkes'', Germany. R.\ Z.\ is supported by the DFG (project number 355354916, GZ ZA 547/4-2).}
\email[Corresponding author:]{rico.zacher@uni-ulm.de}
\address[Frederic Weber, Rico Zacher]{Institut f\"ur Angewandte Analysis, Universit\"at Ulm, Helmholtzstra\ss{}e 18, 89081 Ulm, Germany.}


\begin{abstract} We establish a reduction principle to derive Li-Yau inequalities for non-local diffusion problems in a very
general framework, which covers both the discrete and continuous setting. Our approach is not based on curvature-dimension inequalities but on heat kernel representations of the solutions and consists in reducing the problem to the heat kernel.
As an important application we solve a long-standing open problem by obtaining a Li-Yau inequality for positive solutions $u$ to the fractional (in space) heat equation of the form
$(-\Delta)^{\beta/2}(\log u)\le C/t$, where $\beta\in (0,2)$. We also show that this Li-Yau inequality allows to derive
a Harnack inequality. We further illustrate our general result with an example in the discrete setting by proving a sharp Li-Yau inequality for diffusion on a complete graph. 
\end{abstract}

\maketitle

\bigskip
\noindent \textbf{Keywords:} Li-Yau inequality, differential Harnack inequality, Harnack inequality, non-local diffusion, heat kernel, fractional Laplacian, continuous-time Markov chains, complete graphs
 
 \noindent \textbf{MSC(2020)}: 35R11, 35K08, 60J27, 39A12

\section{Introduction}
The celebrated Li-Yau inequality states that on a complete Riemannian manifold $M$ with topological dimension $n$ and non-negative Ricci curvature, we have
\begin{equation}\label{eq:classicalLiYau}
- \Delta (\log u) \leq \frac{n}{2t},\quad t>0,
\end{equation}
for any positive solution $u$ to the heat equation $\partial_t u = \Delta u$ on $(0,\infty)\times M$, where $\Delta$ denotes the Laplace-Beltrami operator. This inequality originates from the seminal work \cite{LY}. It is optimal in the sense that equality is achieved for the heat kernel in the Euclidean case. As an important application of \eqref{eq:classicalLiYau} (sharp) parabolic Harnack inequalities can be deduced. Using the $\Gamma$-calculus of Bakry and \'Emery, the Li-Yau inequality \eqref{eq:classicalLiYau} has been generalized to Markov diffusion operators that satisfy the curvature-dimension condition $CD(0,n)$, see \cite{BBG} and also the extensive monograph \cite{BGL}. 

Concerning the non-local situation, the approach via curvature-dimension inequalities has stimulated a lot of research in the {\em discrete} setting. Based on discrete replacements of certain chain rule identities, in \cite{BHLLMY15,DKZ,Mun} suitable substitutes of curvature-dimension conditions have been introduced in order to prove Li-Yau type inequalities for generalized graph Laplacians. Corresponding Li-Yau inequalities for non-local operators describing diffusion processes with arbitrary long jumps do not seem to be known.

In the continuous setting, the fractional Laplacian on $\R^d$ is one of the most prominent representatives of a non-local operator. It is a natural question to ask in which way properties for the Laplace operator in the Euclidean case carry over to the fractional Laplacian. In particular it is of great interest whether a Li-Yau type inequality is also valid for positive solutions of the fractional heat equation, and if yes, whether the time-dependence is like in \eqref{eq:classicalLiYau}. The recent survey article \cite{Gar} of Garofalo dedicates a whole chapter to address these open questions.

The main aim of this article is to establish a general reduction principle to derive Li-Yau inequalities,  that applies to the discrete and continuous setting, respectively, and which answers both of the above questions for the fractional heat equation positively. For that purpose, we will not follow the approach of curvature-dimension inequalities. Remarkably, in the quite recent article \cite{SWZ} it has been shown that  the fractional Laplacian does not satisfy $CD(\kappa,n)$ for any pair $(\kappa,n)\in \R \times (0,\infty)$, which is in sharp contrast to the Euclidean Laplacian. Instead, we follow the different approach of reducing the problem to the heat kernel. In the case of the classical heat equation on $\R^d$, the heat kernel satisfies \eqref{eq:classicalLiYau} with equality and one is able to deduce the Li-Yau inequality \eqref{eq:classicalLiYau} by using that positive solutions to the heat equation are given as the convolution of the heat kernel with the respective initial datum, see e.g.\ \cite[Section 21]{Gar}. We show in a very general framework that this reduction principle also works for positive solutions to the corresponding non-local diffusion equation. Indeed, by Theorem \ref{thm:abstractmainresult} we conclude that the heat kernel determines the corresponding Li-Yau inequality for solutions that can be represented via a respective integral representation involving the corresponding heat kernel. As an application, we apply this principle to the fractional heat equation and derive a Li-Yau inequality (see Theorem \ref{thm:LiYaufrak}), which is optimal with respect to the time-behaviour. To highlight that our method applies to a quite general setting, we also illustrate the case of a discrete state space in Section \ref{sec:discrete}. Here, we consider the example that the underlying graph is given by the complete graph $K_n$, $n\geq 2$, and deduce an optimal  Li-Yau type inequality, which improves known results from \cite{DKZ}. 

As we have mentioned before, parabolic Harnack inequalities belong to the most important applications of the original Li-Yau estimate. Therefore it is natural to ask whether non-local Li-Yau inequalities lead to similar applications. While in the discrete setting this has been positively answered by \cite{BHLLMY15, DKZ, Mun}, we are not aware of any proof of a Harnack inequality via a Li-Yau type inequality for an evolution equation with a purely non-local diffusion operator in the space-continuous setting. In Theorem \ref{HarnackFracEvol} we derive a scale-invariant Harnack inequality for the fractional heat equation, where the crucial point
is not the result as such (in fact, better results already exist in the literature, see Remark \ref{ClassicalHarnack} below), but the fact that such a derivation is possible at all. In contrast to the classical heat 
equation, we do not have a gradient term $|\nabla( \log u)|^2$ in the corresponding differential Harnack inequality, but have
 to work with the non-local operator $\Psi_\Upsilon(\log u)$ (see \eqref{eq:PsiPalme})
involving the function $\Upsilon(z)=e^z-1-z$, which makes the proof much more difficult. 
Here we point out that one cannot argue as in the discrete case, where this operator already appeared, see the work \cite{DKZ} on Li-Yau inequalities
in the discrete setting. We remark that the operator $\Psi_\Upsilon$ also plays a fundamental role in the context of curvature-dimension conditions for non-local
operators including the discrete case of Markov chains, see \cite{WZ}.

The article is organized as follows. In the next section we establish a reduction principle to derive Li-Yau inequalities for non-local diffusion problems in a very general framework. Thereafter, we apply this principle in Section \ref{sec:frakLaplace} to the fractional heat equation and in Section \ref{sec:discrete} to the diffusion equation with the Markov generator that corresponds to the complete graph. Section \ref{HarnackInequ} is devoted to the derivation of Harnack inequalities by means of the Li-Yau estimates obtained before.

\medskip
We have been informed that Tuhin Ghosh and Moritz Kassmann from Bielefeld (Germany) have recently
proved a different version of Li-Yau type inequality for the fractional heat equation by an alternative approach, see
\cite{GhKa}. Their result also allows to derive a Harnack inequality. A preprint shall be available soon.
\section{A reduction principle for deriving Li-Yau inequalities}\label{sec:abstractsection}
Let $(M,d)$ be a metric space and $\mathcal{B}(M)$ denote the Borel $\sigma$-algebra on $M$. We consider a non-local operator of the form
\begin{equation}\label{eq:MMSnonlocaloperator}
L f(x) = \int_{M\setminus \{x\}} \big( f(y)-f(x)\big) k(x,\mathrm{d}y),
\end{equation}
where the kernel is such that $k(x,\cdot)$ defines a $\sigma$-finite measure  on $\mathcal{B}(M\setminus\{x\})$ for any $x\in M$ and $f:M\to \R$ is such that the integral exists. We also include the case where the integral on the right-hand side of \eqref{eq:MMSnonlocaloperator} is singular. In this situation one replaces $\int_{M \setminus \{x\}}$ by $\lim\limits_{\varepsilon \to 0^+} \int_{M \setminus B_\varepsilon(x)}$ in the right-hand side of \eqref{eq:MMSnonlocaloperator}. This is motivated by the important example of the fractional Laplace operator, where we choose $M=\R^n$ and $d$ as the corresponding Euclidean distance.  However, the quite general formulation also covers a variety of other important situations, such as, for instance,  
the case where $L$ is the generator of a continuous-time Markov chain on a discrete state space
endowed with the natural graph structure with $M$ being the set of vertices and edge weights given by the corresponding transition rates between two respective states.

In \cite{DKZ}, the following formula serves as a replacement of a classical chain rule. It has been established in the context of locally finite graphs and also for a class of non-local operators on $\R^n$ that are of the form \eqref{eq:MMSnonlocaloperator}. We state the result here again due to our more general setting. The short proof follows analogously as in \cite{DKZ}.
\begin{lemma}\label{lem:Hfundametalidentity}
Let $D\subset \R$ be an open set, $h \in C^1(D;\R)$, $f:M \to D$ and $x\in M$ such that $Lf(x)$ and $L(h(f))(x)$ exist. Then we have
\begin{equation*}
L \big( h(f)\big)(x) = h'(f(x)) L f(x) + \int_{M \setminus \{x\}} \Lambda_h\big(f(y),f(x)\big)k(x,\mathrm{d}y),
\end{equation*}
where $\Lambda_h(w,z):= h(w)-h(z)-h'(z)(w-z)$ for any $w,z \in D$.
\end{lemma}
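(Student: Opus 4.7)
The plan is to prove the identity pointwise in the integrand, via a Taylor-type decomposition of the difference $h(f(y))-h(f(x))$, and then integrate. Starting from the definition \eqref{eq:MMSnonlocaloperator},
\begin{equation*}
L(h(f))(x) = \int_{M\setminus\{x\}} \bigl(h(f(y))-h(f(x))\bigr)\,k(x,\mathrm{d}y),
\end{equation*}
the key observation is the purely algebraic identity
\begin{equation*}
h(f(y))-h(f(x)) = h'(f(x))\bigl(f(y)-f(x)\bigr) + \Lambda_h\bigl(f(y),f(x)\bigr),
\end{equation*}
which holds for every $y\in M\setminus\{x\}$ just from the definition of $\Lambda_h$. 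In the non-singular case one splits the integral by linearity, pulls the constant (in $y$) factor $h'(f(x))$ outside the first piece to recover $h'(f(x))Lf(x)$, and is done.

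In the singular case, where both $Lf(x)$ and $L(h(f))(x)$ are defined as principal values $\lim_{\varepsilon\to 0^+}\int_{M\setminus B_\varepsilon(x)}$, I would first perform the decomposition on $M\setminus B_\varepsilon(x)$, on which all three integrals converge absolutely as ordinary Lebesgue integrals, so that the algebraic identity transfers immediately to the integrated form
\begin{equation*}
\int_{M\setminus B_\varepsilon(x)}\!\!\bigl(h(f(y))-h(f(x))\bigr)k(x,\mathrm{d}y) = h'(f(x))\!\int_{M\setminus B_\varepsilon(x)}\!\!(f(y)-f(x))k(x,\mathrm{d}y) + \int_{M\setminus B_\varepsilon(x)}\!\!\Lambda_h(f(y),f(x))k(x,\mathrm{d}y).
\end{equation*}
Passing to the limit $\varepsilon\to 0^+$, the left-hand side tends to $L(h(f))(x)$ and the first term on the right tends to $h'(f(x))Lf(x)$ by hypothesis. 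Hence the limit of the $\Lambda_h$-integral exists and equals the required difference, giving the claimed formula with the $\Lambda_h$-term understood in the principal-value sense.

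The content is essentially the algebraic identity, so there is no genuine obstacle; the only (minor) point requiring care is the limit-of-differences argument in the singular case, which uses the two existence hypotheses in exactly the right way to force convergence of the $\Lambda_h$-integral without demanding its absolute convergence a priori. This mirrors the argument given in \cite{DKZ} in the locally finite graph setting and for the $\R^n$-kernels treated there; the general metric measure space setting introduces no additional difficulty, since the identity is pointwise in $y$ and only the measure $k(x,\cdot)$ is used.
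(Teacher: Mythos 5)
Your proof is correct and follows the same route that the paper (which simply cites \cite{DKZ} for this ``short proof'') has in mind: write the pointwise algebraic identity $h(f(y))-h(f(x)) = h'(f(x))\bigl(f(y)-f(x)\bigr)+\Lambda_h\bigl(f(y),f(x)\bigr)$ coming directly from the definition of $\Lambda_h$, integrate against $k(x,\mathrm{d}y)$ over $M\setminus B_\varepsilon(x)$, pull out the constant factor $h'(f(x))$, and let $\varepsilon\to 0^+$ using the assumed existence of $Lf(x)$ and $L(h(f))(x)$ to force convergence of the $\Lambda_h$-integral. Nothing is missing.
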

In the context of this article, we will apply Lemma \ref{lem:Hfundametalidentity} with the specific choice of $h(r)=\log(r)$, $r \in (0,\infty)$. In this case we have
\begin{equation*}
\Lambda_{\log}(w,z)= \log w - \log z -\frac{w-z}{z} = -\Upsilon(\log w - \log z), \, w,z \in (0,\infty),
\end{equation*}
where $\Upsilon(r):= e^r - r -1$, $r\in \R$. Introducing the 
operator 
\begin{equation}\label{eq:PsiPalme}
\Psi_\Upsilon(f)(x) = \int_{M \setminus \{x\}} \Upsilon \big( f(y)-f(x)\big) k(x,\mathrm{d}y),
\end{equation}
we deduce from Lemma \ref{lem:Hfundametalidentity} that
\begin{equation}\label{eq:fundamentalidentity}
L (\log f) =\frac{L f }{f} - \Psi_\Upsilon(\log f)
\end{equation}
holds for any positive function $f$ whenever $L f$ and $L (\log f)$ exist. Note that a possible singularity at $y=x$ in the right-hand side of \eqref{eq:PsiPalme} does not play a role by positivity of $\Upsilon$ in the sense that the integral is either finite or $\infty$. In the particular case of the fractional Laplacian, a simple Taylor argument shows that the quadratic behavior of 
$\Upsilon$ near the origin ensures that the singularity of the corresponding integral kernel is compensated provided the function is sufficiently smooth. 

Now, we establish the key estimate of this article.
\begin{lemma}\label{lem:generalintegralestimate}
Let $L$ be an operator of the form \eqref{eq:MMSnonlocaloperator} and let $H:M\times M \to (0,\infty)$ be such that $H(x,\cdot)$ is $\mathcal{B}(M)$-measurable and the restriction  $\left.H\right|_{M\setminus \{x\} \times M}$  is $\mathcal{B}(M\setminus\{x\}) \otimes \mathcal{B}(M)$-measurable for any $x \in M$, respectively. Further, let $f:M\to (0,\infty)$  be $\mathcal{B}(M)$-measurable. We assume that the integral $P f(x)= \int_{M} H(x,y)f(y)\,\mathrm{d}\nu(y)$ and also $\Psi_\Upsilon(\log P f)$ exist for any $x \in M$ and that for $\nu$-a.e. $y \in M$ the expression $\Psi_\Upsilon(\log H(\cdot,y))(x)$ exists for every  $x \in M$.  Here $\nu:\mathcal{B}(M)\to [0,\infty]$ is a $\sigma$-finite measure. Then we have
\begin{equation} \label{KEY}
\int_{M} \Psi_\Upsilon (\log H(\cdot,y))(x)H(x,y)f(y) \mathrm{d}\nu(y) \geq \Psi_\Upsilon(\log P f)(x) P f(x).
\end{equation}
\end{lemma}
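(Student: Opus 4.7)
The plan is to prove \eqref{KEY} pointwise in $x$ by swapping the order of integration and then, for each fixed $z\in M\setminus\{x\}$, reducing the inner inequality to a Jensen-type estimate against the natural probability measure built out of the representation kernel $H$.

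First I would fix $x \in M$ and rewrite the left-hand side. By the nonnegativity of $\Upsilon$ and the measurability hypotheses, Tonelli's theorem applies and gives
\begin{equation*}
\text{LHS} = \int_{M\setminus\{x\}} \left( \int_M \Upsilon\bigl(\log H(z,y) - \log H(x,y)\bigr)\, H(x,y) f(y)\,\mathrm{d}\nu(y)\right) k(x,\mathrm{d}z).
\end{equation*}
Since the right-hand side of \eqref{KEY} is
\begin{equation*}
\int_{M\setminus\{x\}} \Upsilon\bigl(\log Pf(z) - \log Pf(x)\bigr)\, Pf(x)\, k(x,\mathrm{d}z),
\end{equation*}
it suffices to establish, for each $z \in M\setminus\{x\}$ with $Pf(z) < \infty$, the pointwise bound
\begin{equation*}
\int_M \Upsilon\bigl(\log H(z,y) - \log H(x,y)\bigr)\, H(x,y) f(y)\,\mathrm{d}\nu(y) \;\geq\; Pf(x)\,\Upsilon\bigl(\log Pf(z) - \log Pf(x)\bigr).
\end{equation*}

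For the second step I would fix such $x,z$, set $g(y) := \log H(z,y) - \log H(x,y)$, and introduce the probability measure $\mathrm{d}\mu(y) := Pf(x)^{-1} H(x,y)f(y)\,\mathrm{d}\nu(y)$ on $M$ (well-defined since $Pf(x) \in (0,\infty)$ and $Hf > 0$). The key observation is that
\begin{equation*}
\int_M e^{g(y)}\,\mathrm{d}\mu(y) = \frac{1}{Pf(x)} \int_M H(z,y) f(y)\,\mathrm{d}\nu(y) = \frac{Pf(z)}{Pf(x)},
\end{equation*}
so the definition $\Upsilon(r) = e^r - r - 1$ yields
\begin{equation*}
\int_M \Upsilon(g)\,\mathrm{d}\mu = \frac{Pf(z)}{Pf(x)} - \int_M g\,\mathrm{d}\mu - 1,
\end{equation*}
while the target value on the right equals $\frac{Pf(z)}{Pf(x)} - \log\frac{Pf(z)}{Pf(x)} - 1$. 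The desired inequality therefore reduces to
\begin{equation*}
\int_M g(y)\,\mathrm{d}\mu(y) \;\leq\; \log\!\left(\int_M e^{g(y)}\,\mathrm{d}\mu(y)\right),
\end{equation*}
which is exactly Jensen's inequality for the concave function $\log$ applied to the $\mu$-integrable nonnegative random variable $e^g$. Multiplying through by $Pf(x)$ and integrating against $k(x,\mathrm{d}z)$ then yields \eqref{KEY}.

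The main obstacle I anticipate is purely technical: justifying the application of Tonelli/Fubini and of Jensen's inequality under the stated measurability and existence assumptions, in particular ensuring that $g$ is $\mu$-integrable whenever the right-hand side of \eqref{KEY} is finite, and handling potential singularities of $k(x,\cdot)$ near $x$. Both issues are controlled by the nonnegativity $\Upsilon \geq 0$ (so Tonelli applies unconditionally, with both sides lying in $[0,\infty]$) and by the hypothesis that $\Psi_\Upsilon(\log H(\cdot,y))(x)$ and $\Psi_\Upsilon(\log Pf)(x)$ exist; in the degenerate case where the right-hand side of \eqref{KEY} equals $+\infty$ on a set of positive $k(x,\cdot)$-measure, the Jensen step shows the corresponding integrand on the left is also $+\infty$, so the inequality holds trivially.
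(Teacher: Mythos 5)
Your proof is correct, and it rests on the same underlying convexity fact as the paper's proof, but your packaging of that fact is different and arguably cleaner. Both arguments begin by applying Tonelli to put the $k(x,\mathrm{d}z)$ integral on the outside; the crux is then the pointwise-in-$z$ inequality
\[
\int_M \Upsilon\bigl(\log\tfrac{H(z,y)}{H(x,y)}\bigr)\,\mathrm{d}\mu(y)\ \ge\ \Upsilon\Bigl(\log\tfrac{Pf(z)}{Pf(x)}\Bigr),
\qquad \mathrm{d}\mu(y)=\tfrac{H(x,y)f(y)}{Pf(x)}\,\mathrm{d}\nu(y),
\]
which is Jensen's inequality for the convex map $F(r)=\Upsilon(\log r)=r-\log r-1$ applied at the barycenter $\int (H(z,\cdot)/H(x,\cdot))\,\mathrm d\mu = Pf(z)/Pf(x)$. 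The paper derives this by the tangent-line (first-order convexity) bound $F(r)-F(s)\ge F'(s)(r-s)$ with $s=Pf(z)/Pf(x)$, and then checks by a direct computation ($I_{\varepsilon,1}-I_{\varepsilon,2}=0$) that the linear term integrates to zero — which is precisely the observation that $s$ is the $\mu$-mean. You instead exploit the additive decomposition $\Upsilon(r)=e^{r}-r-1$: the $e^{r}$ and constant parts of $F$ are affine in $r=\log X$ and therefore reproduce exactly under integration against $\mu$, leaving only Jensen for the concave $\log$. This makes the probabilistic structure (normalizing $H(x,\cdot)f\,\mathrm d\nu$ to a probability measure) explicit, avoids the $\varepsilon$-truncation and the separate bookkeeping of $I_{\varepsilon,1}, I_{\varepsilon,2}$, and isolates the one real inequality used. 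Your discussion of the degenerate cases (left integrand $=+\infty$, and integrability of $g$ when both $\int\Upsilon(g)\,\mathrm d\mu$ and $\int e^{g}\,\mathrm d\mu$ are finite) is the right thing to worry about and is handled correctly: $g^+\le e^{g}$ and $g^-\le \Upsilon(g)+1$ give the needed integrability, and when the left side is infinite the inequality is trivial.
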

\begin{proof}
Recalling \eqref{eq:PsiPalme}, we write
\begin{align*}
\Psi_\Upsilon(\log P f)(x) P f (x) &= \int_{M\setminus\{x\}}  \Upsilon\Big(\log \frac{P f(h)}{P f(x)}\Big)P f(x)k(x,\mathrm{d}h) \\
&= \int_{M\setminus\{x\}}\Upsilon\Big(\log \frac{P f(h)}{P f(x)}\Big) \int_{M} H(x,y)f(y)\mathrm{d}\nu(y)\,k(x,\mathrm{d}h).
\end{align*}
Next, by Tonelli's theorem we observe that
\begin{align*}
\int_{M} \Psi_\Upsilon\big(\log H(\cdot,y)) (x) & H(x,y) f(y)\mathrm{d}\nu(y) \\
&= \int_{M} H(x,y) f(y) \int_{M\setminus\{x\}}  \Upsilon\Big(\log \frac{H(h,y)}{H(x,y)}\Big)k(x,\mathrm{d}h) \, \mathrm{d}\nu(y) \\
&= \int_{M\setminus\{x\}}\int_{M} H(x,y) f(y)\Upsilon\Big( \log \frac{H(h,y)}{H(x,y)}\Big)\mathrm{d}\nu(y)\,k(x,\mathrm{d}h).
\end{align*}
Consequently, we have
\begin{equation}\label{eq:Hlhsbeforeconvexity}
\begin{split}
\int_{M} \Psi_\Upsilon\big(\log H(\cdot,y)) &(x) H(x,y) f(y)\mathrm{d}\nu(y) - \Psi_\Upsilon(\log P f)(x) P f (x) \\
&= \int_{M\setminus\{x\}} \int_{M} \Big(\Upsilon\Big( \log \frac{H(h,y)}{H(x,y)}\Big) - \Upsilon\Big(\log \frac{P f(h)}{P f(x)}\Big)\Big) H(x,y)f(y)\mathrm{d}\nu(y)\, k(x,\mathrm{d}h). 
\end{split}
\end{equation}
Now, it can be readily checked that the mapping $r\mapsto \Upsilon(\log(r))$, $r \in (0,\infty)$, is convex. Since 
$\frac{\mathrm{d}}{\mathrm{d}r}\Upsilon(\log(r))=\frac{r-1}{r}$, $r\in (0,\infty)$, we infer from convexity that
\begin{equation*}
\Upsilon(\log(r))-\Upsilon(\log(s)) \geq \frac{s-1}{s}(r-s)
\end{equation*}
holds for any $r,s \in (0,\infty)$. With this at hand and using the positivity of $f$ and $H$, we can now estimate the right-hand side of \eqref{eq:Hlhsbeforeconvexity} as follows.
\begin{equation*}
\begin{split}
&\int_{M \setminus\{x\}} \int_{M}\Big(\Upsilon\Big( \log \frac{H(h,y)}{H(x,y)}\Big) - \Upsilon\Big(\log \frac{P f(h)}{P f(x)}\Big)\Big) H(x,y)f(y)\mathrm{d}\nu(y)\, k(x,\mathrm{d}h) \\
&\geq \int_{M \setminus\{x\}} \int_{M} \frac{P f(h) - P f(x)}{P f(h)}\Big(\frac{H(h,y)}{H(x,y)} - \frac{P f(h)}{P f(x)}\Big) H(x,y)f(y)\mathrm{d}\nu(y) \, k(x,\mathrm{d}h).
\end{split}
\end{equation*}
Having a possible singularity in mind, we choose $\varepsilon>0$  arbitrary (but small enough such that $B_\varepsilon(x) \subsetneq M$) and then use linearity to observe
\begin{align*}
\int_{M \setminus B_\varepsilon(x)} &\int_{M} \frac{P f(h) - P f(x)}{P f(h)}\Big(\frac{H(h,y)}{H(x,y)} - \frac{P f(h)}{P f(x)}\Big) H(x,y)f(y)\mathrm{d}\nu(y) \, k(x,\mathrm{d}h) \\
&= \int_{M \setminus B_\varepsilon(x)}\int_{M} \frac{P f(h) - P f(x)}{P f(h)}  H(h,y) f(y)\mathrm{d}\nu(y) \, k(x,\mathrm{d}h) \\
&\quad - \int_{M \setminus B_\varepsilon(x)}\int_{M} \frac{P f(h)-P f(x)}{P f(x)}  H(x,y) f(y) \mathrm{d}\nu(y)\, k(x,\mathrm{d}h) \\
&=: I_{\varepsilon,1} - I_{\varepsilon,2}. 
\end{align*}
Further, we have
\begin{align*}
I_{\varepsilon,1} &= \int_{M \setminus B_\varepsilon(x)}  \frac{P f(h)- P f(x)}{P f(h)} \int_{M} H(h,y) f(y) \mathrm{d}\nu(y) \, k(x,\mathrm{d}h)\\
&= \int_{M \setminus B_{\varepsilon}(x)}\big(P f(h) - P f(x)\big) k(x,\mathrm{d}h)
\end{align*}
and similarly
\begin{align*}
I_{\varepsilon,2} &= \frac{1}{P f(x)}\int_{M \setminus B_\varepsilon(x)}  \big(P f(h) - P f(x)\big)\int_{M} H(x,y) f(y)\mathrm{d}\nu(y) \, k(x,\mathrm{d}h) \\
&= \int_{M \setminus B_{\varepsilon}(x)} \big(P f(h) - P f(x)\big)k(x,\mathrm{d}h),
\end{align*}
which yields that $I_{\varepsilon,1}-I_{\varepsilon,2}=0$ holds. Sending $\varepsilon \to 0$ yields the claim.
\end{proof}
\begin{remark}
Lemma \ref{lem:generalintegralestimate} is a non-local version of the inequality
\begin{equation} \label{Unglclassic}
\int_{\iR^n}\big|\nabla_x \log H(x,y)\big|^2 H(x,y) f(y)\,\mathrm{d}y\ge \big|\nabla \log Pf(x)\big|^2 Pf(x),
\end{equation}
where $Pf(x)=\int_{\iR^n}H(x,y)f(y)\mathrm{d}y$, for sufficiently regular, positive functions $H$ and $f$.
This shows, as already indicated by the chain rule \eqref{eq:fundamentalidentity}, that the expression 
$\Psi_\Upsilon(\log f)$ serves as a natural analogue of $|\nabla \log f|^2$ in the non-local setting.

In contrast to inequality \eqref{KEY}, the argument for \eqref{Unglclassic} is quite simple. In fact,
using H\"older's inequality we have
\begin{align*}
(\partial_{x_i} Pf(x))^2 & = \big(\int_{\iR^n} \partial_{x_i} H(x,y)f(y)\,\mathrm{d}y\big)^2\\
& \le \int_{\iR^n} \frac{(\partial_{x_i} H(x,y))^2}{H(x,y)}f(y) \,\mathrm{d}y \,\,
\int_{\iR^n}  H(x,y) f(y) \,\mathrm{d}y,
\end{align*}
which directly leads to \eqref{Unglclassic} by summing up and employing the classical chain rule for the gradient.
\end{remark}
We now come to our main result. For $T>0$ arbitrary, we consider  $u:[0,T) \times M \to (0,\infty)$ with $u(\cdot,x) \in C^1\big( (0,T)\big)$ for any $x \in M$ and  such that
\begin{equation}\label{eq:heatequation}
\partial_t u(t,x) = L (u(t,\cdot))(x)
\end{equation}
holds for any $(t,x)\in (0,T)\times M$. We set $u_0(\cdot)=u(0,\cdot)$ and  make the following assumptions:
\begin{itemize}
\item[(A1)] There exists a mapping $p: (0,T) \times M \times M \to (0,\infty)$  such that $p(t,x,\cdot)$ is $\mathcal{B}(M)$-measurable and the restriction $\left.p(t,\cdot,\cdot)\right|_{M\setminus \{x\} \times M}$ is $\mathcal{B}(M \setminus \{x\}) \otimes \mathcal{B}(M)$-measurable for any $(t,x) \in(0,T)\times M$. Moreover, there exists  a $\sigma$-finite  reference measure $\mu:\mathcal{B}(M)\to[0,\infty]$, such that $p$ is differentiable with respect to time and satisfies for any $(t,x)\in (0,T)\times M$ the equation $\partial_t p(t,x,y) = L (p(t,\cdot,y))(x)$ for $\mu-$a.e. $y \in M$, and, moreover, that the representation formula
\begin{equation}\label{eq:A1solutionformula}
u(t,x)= \int_M p(t,x,y) u_0(y) \mathrm{d}\mu(y)
\end{equation}
holds true at any $(t,x) \in (0,T)\times M$.
\item[(A2)] We have that 
\begin{equation*}
\partial_t u(t,x) = \int_M \partial_t p(t,x,y) u_0(y) \mathrm{d}\mu(y)
\end{equation*}
holds at any $(t,x)\in (0,T)\times M$.
\end{itemize}

Note that assumption (A2) is quite mild in the sense that it can be justified by means of the dominated convergence theorem for a large class of examples.

Combining \eqref{eq:fundamentalidentity} and \eqref{eq:heatequation}, we observe that 
\begin{equation}\label{eq:logchainrulesultion}
\partial_t \log u = \frac{\partial_t u}{u}=L (\log u) + \Psi_\Upsilon(\log u).
\end{equation}
Note that due to assumption (A1), in \eqref{eq:logchainrulesultion}, we can also replace $u$ by $p(\cdot,\cdot,y)$ for $\mu-$a.e. $y\in M$.

The above assumptions have a clear motivation from the viewpoint of stochastic processes. Let $\big(X_t\big)_{t \geq 0}$ be a Markov process on a probability space $(\Omega,\mathcal{F},\mathbb{P})$ with state space given by a locally compact and separable metric space $M$. One defines the semigroup
\begin{equation}\label{eq:semigroupMarkovprocess}
P_t f (x) = \mathbb{E}(f(X_t) | X_t = x)
\end{equation}
for suitable functions $f$ (e.g. bounded and measurable), which is then a solution to the corresponding Cauchy problem. Very often $P_t f$ is given by an integral representation as in \eqref{eq:A1solutionformula}, where the measure $\mu$ commonly plays the role of an invariant measure for the process and the kernel $p(t,x,y)$ describes the transition density for the Markov process. Transition densities play a fundamental role in several aspects of probability theory and analysis. For instance, they form the basis of the parametrix-technique to construct L\'evy-type processes, see e.g. \cite{Jac}. The case of $\alpha$-stable isotropic L\'evy processes will be discussed in Section \ref{sec:frakLaplace} below.
Of somewhat different flavour is the situation  when $M$ is a discrete set, in which case we refer to $(X_t)_{t\geq 0}$ as a continuous-time Markov chain. Then the kernel $p(t,x,y)$ coincides with the respective transition probability, i.e.\ the probability of the process being at time $t$ in the state $y$ provided that it has started in the state $x$. We illustrate this situation in Section \ref{sec:discrete} by considering the special case when the underlying graph to the Markov generator is given by a complete graph.

The fundamental estimate of Lemma \ref{lem:generalintegralestimate} allows to reduce the problem of deriving Li-Yau inequalities for \eqref{eq:heatequation} to the kernel function of assumption (A1), as the following result shows. 
\begin{theorem}\label{thm:abstractmainresult}
Let $u:[0,T)\times M \to (0,\infty)$ solve \eqref{eq:heatequation} and satisfy assumptions (A1) and (A2) such that at any $(t,x)\in(0,T)\times M$ the expressions $L (\log p(t,\cdot,y))(x)$ for $\mu-$a.e. $y \in M$ and $L(\log u(t,\cdot))(x)$ exist, respectively. If the estimate 
\begin{equation}\label{eq:heatkernelLiYau}
-L \big(\log  p(t,\cdot,y)\big)(x)\leq \varphi(t,x)
\end{equation}
holds at any $(t,x) \in (0,T)\times M$ and $\mu-$a.e. $y\in M$, where $\varphi: (0,\infty)\times M \to \R$, then the Li-Yau type inequality
\begin{equation}\label{eq:LiYauabstract}
- L (\log u(t,\cdot))(x)\leq \varphi(t,x)
\end{equation}
holds true for every $(t,x) \in (0,T)\times M$.
\end{theorem}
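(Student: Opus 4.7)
The plan is to combine the fundamental chain rule identity \eqref{eq:logchainrulesultion}, the pointwise kernel hypothesis \eqref{eq:heatkernelLiYau}, and the integral estimate of Lemma~\ref{lem:generalintegralestimate} in order to transfer the Li-Yau bound from $p(t,\cdot,y)$ to $u(t,\cdot)$. First, I apply \eqref{eq:logchainrulesultion} to $u$ itself, which gives at any $(t,x)\in(0,T)\times M$
\[
-L(\log u(t,\cdot))(x) = \Psi_\Upsilon(\log u(t,\cdot))(x) - \frac{\partial_t u(t,x)}{u(t,x)}.
\]
Thus the desired inequality \eqref{eq:LiYauabstract} is equivalent to the upper bound
\[
\Psi_\Upsilon(\log u(t,\cdot))(x) \leq \varphi(t,x) + \frac{\partial_t u(t,x)}{u(t,x)},
\]
and it suffices to establish this reformulation.

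Next, I invoke Lemma~\ref{lem:generalintegralestimate} with $H(x,y)=p(t,x,y)$ (for each fixed $t$), $f=u_0$, and $\nu=\mu$. By (A1) the representation formula \eqref{eq:A1solutionformula} identifies $Pf$ with $u(t,\cdot)$, and the measurability and existence prerequisites of the lemma are precisely those encoded in (A1) and in the hypotheses of the theorem. The lemma therefore yields
\[
u(t,x)\,\Psi_\Upsilon(\log u(t,\cdot))(x) \leq \int_M \Psi_\Upsilon(\log p(t,\cdot,y))(x)\, p(t,x,y)\, u_0(y)\, \mathrm{d}\mu(y).
\]

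Now I apply \eqref{eq:logchainrulesultion} a second time, this time to the map $(t,x)\mapsto p(t,x,y)$, which by (A1) also solves \eqref{eq:heatequation} for $\mu$-a.e.\ $y$. Combined with the kernel hypothesis \eqref{eq:heatkernelLiYau}, this produces the pointwise bound
\[
\Psi_\Upsilon(\log p(t,\cdot,y))(x) = \frac{\partial_t p(t,x,y)}{p(t,x,y)} - L(\log p(t,\cdot,y))(x) \leq \frac{\partial_t p(t,x,y)}{p(t,x,y)} + \varphi(t,x).
\]
Inserting this estimate into the previous integral inequality, the factor $p(t,x,y)$ cancels against the denominator in the first summand, so assumption (A2) together with \eqref{eq:A1solutionformula} gives
\[
u(t,x)\,\Psi_\Upsilon(\log u(t,\cdot))(x) \leq \partial_t u(t,x) + \varphi(t,x)\,u(t,x).
\]
Dividing by $u(t,x)>0$ yields exactly the reformulation from the first step, hence \eqref{eq:LiYauabstract}.

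The main obstacle is bookkeeping rather than conceptual: one needs to check that the measurability, integrability and existence requirements of Lemma~\ref{lem:generalintegralestimate} are satisfied for the choice $H(x,y)=p(t,x,y)$, and that the interchange $\partial_t\int=\int\partial_t$ used to match $\int_M\partial_t p(t,x,y)u_0(y)\,\mathrm{d}\mu(y)$ with $\partial_t u(t,x)$ is justified. Both are built directly into assumptions (A1) and (A2), so the verification amounts to reading off the hypotheses. The only mild subtlety is that \eqref{eq:logchainrulesultion} applied to $p(\cdot,\cdot,y)$ is valid only for $\mu$-a.e.\ $y$, but this is precisely the negligible set allowed in (A1) and does not affect the integrated conclusion.
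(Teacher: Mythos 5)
Your proof is correct and takes essentially the same approach as the paper: both arguments reformulate the desired inequality as $u\,\Psi_\Upsilon(\log u)\le \partial_t u+\varphi u$ via the chain-rule identity \eqref{eq:logchainrulesultion}, bound the kernel analogously, and combine Lemma~\ref{lem:generalintegralestimate} with (A1) and (A2). The only difference is cosmetic — you start from $u\,\Psi_\Upsilon(\log u)$ and bound upward, while the paper starts from $\partial_t u+\varphi u$ and bounds downward — but the chain of inequalities is the same.
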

\begin{remark}\label{differentialHarnack}
Using the evolution equation \eqref{eq:logchainrulesultion} for $\log u$, the Li-Yau inequality \eqref{eq:LiYauabstract}
from Theorem \ref{thm:abstractmainresult} implies the differential Harnack inequality
\begin{equation} \label{DH}
\partial_t \log u(t,x)\ge \Psi_\Upsilon(\log u)(t,x)-\varphi(t,x)
\end{equation}
for all $(t,x) \in (0,T)\times M$. In the discrete setting, it has been shown that inequalities of the form \eqref{DH} imply
Harnack inequalities for positive solutions of the diffusion equation associated with the operator $L$, see \cite{DKZ}. 
To illustrate this, we will look at the example of an unweighted complete graph in Section \ref{HarnackInequ}.
In the same section, we will further consider the fractional heat equation and show that from the corresponding differential Harnack
inequality a Harnack estimate can be derived as well. 
\end{remark}
\begin{proof} The two key ingredients in the proof of Theorem \ref{thm:abstractmainresult} are Lemma \ref{lem:generalintegralestimate} and the evolution equation \eqref{eq:logchainrulesultion} for $\log u$.
 
Using \eqref{eq:logchainrulesultion}, we can rewrite \eqref{eq:LiYauabstract} equivalently as
\begin{equation}\label{eq:Abstractclaimreformulation}
u(t,x) \, \Psi_\Upsilon(\log u(t,\cdot) )(x) \leq \partial_t u(t,x) + \varphi(t,x)\, u(t,x).
\end{equation}
By the same argument, we know by (A1) that the assumption \eqref{eq:heatkernelLiYau} can be reformulated into
\begin{equation}\label{eq:kernelassumpreformulation}
p(t,x,y) \, \Psi_\Upsilon(\log p(t,\cdot,y) )(x) \leq \partial_t p(t,x,y) + \varphi(t,x)\, p(t,x,y)
\end{equation}
for $\mu-$a.e. $y \in M$.
We observe from the assumptions (A1) and (A2) that
\begin{align*}
\partial_t u(t,x) + \varphi(t,x)\,u(t,x) &= \int_M \partial_t p(t,x,y)u_0(y) \mathrm{d}\mu(y) + \varphi(t,x) \int_M p(t,x,y) u_0(y) \mathrm{d}\mu(y) \\
&= \int_M \Big( \partial_t p(t,x,y) + \varphi(t,x)p(t,x,y) \Big) u_0(y) \mathrm{d}\mu(y) \\
&\geq \int_M \Psi_\Upsilon \big( \log p(t,\cdot,y)\big)(x)\, p(t,x,y) u_0(y) \mathrm{d}\mu(y)\\
&\geq \Psi_\Upsilon \big( \log u(t,\cdot)\big)(x) \, u(t,x),
\end{align*}
where we used \eqref{eq:kernelassumpreformulation} in the second to last line and applied Lemma \ref{lem:generalintegralestimate} in the last line (with $H(x,y)=p(t,x,y)$) at $(t,x) \in (0,T)\times M$.
\end{proof}
\begin{remark}
Note that we could relax the assumptions of Theorem \ref{thm:abstractmainresult} in order to get a local version of the above result. More precisely,  given some fixed $(t,x)\in (0,T)\times M$, we only need to assume that   $L\big( \log p(t,\cdot,y)\big)(x)$ for $\mu-$a.e. $y\in M$ and $L \big( \log u(t,\cdot)\big)(x)$ exist, respectively. If then $-L\big( \log p(t,\cdot,y)\big)(x)\leq C$  holds for $\mu-$a.e. $y\in M$ and a constant $C\in \R$, we infer  that $-L\big( \log u(t,\cdot)\big)(x)\leq C$ by the same arguments as in the proof of Theorem \ref{thm:abstractmainresult}.
\end{remark}
\section{Application to the fractional heat equation}\label{sec:frakLaplace}
In this section we want to apply Theorem \ref{thm:abstractmainresult} to derive Li-Yau type inequalities for positive solutions to the fractional heat equation
\begin{equation}\label{eq:FHE}
\partial_t u + \big(- \Delta\big)^\frac{\beta}{2}u = 0
\end{equation}
on the full space $\R^d$, where $\beta \in (0,2)$.

The fractional Laplacian on $\R^d$ can be defined as 
\begin{equation*}
- \big(-\Delta)^\frac{\beta}{2}f(x) = c_{\beta,d}\,\, \mathrm{p.v.}\int_{\R^d} \frac{f(y)-f(x)}{|x-y|^{d+\beta}}\, \mathrm{d}y,
\end{equation*}
where $\beta \in (0,2)$, the normalizing constant $c_{\beta,d}$ is given by $c_{\beta, d} = \frac{2^\beta \Gamma(\frac{d+\beta}{2})}{\pi^\frac{d}{2}|\Gamma(-\frac{\beta}{2})|}$ and $f$ is a suitable function (e.g. in $C^2(\R^d) \cap L^\infty(\R^d)$). There is also an equivalent definition in the pointwise sense that reads as  
\begin{equation*}
- \big(-\Delta)^\frac{\beta}{2}f(x) = \frac{c_{\beta,d}}{2} \int_{\R^d} \frac{f(x+y)+f(x-y) - 2f(x)}{|y|^{d+\beta}}\mathrm{d}y
\end{equation*}
and has the advantage of avoiding the principal value integral.

Using classical Fourier techniques, one can express solutions to \eqref{eq:FHE} as
\begin{equation}\label{eq:solutionfractionalheatequation}
u(t,x) = \big( G^{(\beta)}(t,\cdot) \ast u_0 \big)(x) = \int_{\R^d}G^{(\beta)}(t,x-y) u_0(y) \mathrm{d}y,
\end{equation}
where we refer to $G^{\beta)}$ as the fractional heat kernel of order $\beta \in (0,2)$. We will discuss below that this representation formula in fact holds true for the class of solutions we are interested in. 

But before that, we collect some properties of the fractional heat kernels, which we will use subsequently. We refer to \cite{Gar} and the references therein for a more detailed account.

In the special case of $\beta=1$, there is an explicit formula for the fractional heat kernel available, which reads as
\begin{equation}\label{eq:betaeq1kernel}
G^{(1)}(t,x)= \frac{\Gamma\big(\frac{d+1}{2}\big)}{\pi^\frac{d+1}{2}} \frac{t}{\big(t^2 + |x|^2\big)^{\frac{d+1}{2}}}.
\end{equation}
Concerning the general situation, it goes back to \cite{BlGe} that the fractional heat kernels behave as follows
\begin{equation}\label{eq:behavior}
G^{(\beta)}(t,x) \asymp \frac{t}{\big(t^{\frac{2}{\beta}} + |x|^2\big)^{\frac{d+\beta}{2}}}, \quad (t,x) \in (0,\infty)\times \R^d,
\end{equation}
where the symbol $\asymp$ means that the ratio is bounded by a constant positive factor from above and below.
This yields  in particular that we have for any non-negative $u_0$ 
\begin{equation}\label{eq:asympsolutionintegral}
\int_{\R^d}G^{(\beta)}(t,x-y) u_0(y) \mathrm{d}y \asymp t \int_{\R^d} \frac{u_0(y)}{\big(t^{\frac{2}{\beta}} + |x-y|^2\big)^{\frac{d+\beta}{2}}}\mathrm{d}y,
\end{equation}
which yields that the integral on the right-hand side of \eqref{eq:solutionfractionalheatequation} exists if and only if the integral on the right-hand side of \eqref{eq:asympsolutionintegral} is finite.

Further, we have $G^{(\beta)}(t,\cdot) \in C^\infty(\R^d)$ and that the fractional heat kernels can be expressed in the following self-similar form
\begin{equation}\label{eq:selfsimilarform}
G^{(\beta)}(t,x) = t^{-\frac{d}{\beta}}    \Phi_{\beta}\big(\frac{x}{t^{\frac{1}{\beta}}}\big),
\end{equation}
where $(t,x) \in (0,\infty)\times \R^d$ and $\Phi_{\beta}(y)=G^{(\beta)}(1,y)$ for any $y \in \R^d$.

There also exist specific estimates for the derivatives of the fractional heat kernels, for which we refer to \cite{KimLim}, where those have been studied extensively in quite great generality. We will only make use of 
\begin{equation}\label{eq:FSgradient}
|\nabla     \Phi_{\beta}(x)| \lesssim \frac{1}{|x|^{d+\beta+1}}
\end{equation}
and
\begin{equation}\label{eq:FSHessian}
\Vert \nabla^2     \Phi_{\beta}(x) \Vert \lesssim \frac{1}{|x|^{d+\beta+2}},
\end{equation}
which hold whenever $|x|\geq 1$, cf. \cite{KimLim}. Here we have used the symbol '$\lesssim$' to indicate that the corresponding constant is independent of $x$.

In \cite{BPSV} a Widder-type theorem for the fractional heat equation has been established. More precisely, any non-negative strong solution to \eqref{eq:FHE} is given by \eqref{eq:solutionfractionalheatequation}. Here, by strong solution it is meant that for any fixed $T>0$ we have $\partial_t u \in C\big( (0,T)\times \R^d\big)$, $u \in C \big( [0,T)\times \R^d\big)$ and that \eqref{eq:FHE} holds in the pointwise sense for every $(t,x)\in (0,T)\times \R^d$. Consequently, the assumption (A1) is satisfied for any positive strong solution $u$, where the kernel is given by $p(t,x,y)=G^{(\beta)}(t,x-y)$ and the measure $\mu$ is the Lebesgue measure on $\R^d$. We point out that it is not clear at all whether a Widder-type theorem holds for the heat equation associated with the operator $L$ in the general framework considered in Section \ref{sec:abstractsection}.

Concerning assumption (A2), we refer to \cite{VPQR}, where the bound
\begin{equation*}
| \big( - \Delta \big)^\frac{\beta}{2} (G^{(\beta)}(t,\cdot))(x)| \leq \frac{C}{\big(t^\frac{2}{\beta} + |x|^2\big)^\frac{d+\beta}{2}}
\end{equation*}
has been established for some constant $C>0$. This yields, that assumption (A2) follows from the dominated convergence theorem due to the existence of the integral on the right-hand side of \eqref{eq:solutionfractionalheatequation} (cf. \eqref{eq:asympsolutionintegral}).
\begin{lemma}\label{lem:frakheatkernelLiYau}
We have that
\begin{equation}\label{eq:fractionalheatkernelLiYau}
\big(-\Delta \big)^{\frac{\beta}{2}}(\log G^{(\beta)}(t,\cdot))(x) \leq \frac{C_{LY}(\beta,d)}{t}, 
\end{equation}
holds at any $x \in \R^d$, $t>0$ and $\beta \in (0,2)$, where the finite constant $C_{LY}(\beta,d)>0$ is given by 
\begin{equation}\label{eq:theconstant}
C_{LY}(\beta,d)=\frac{c_{\beta,d}}{2} \sup\limits_{y \in \R^d}\int_{\R^d}\frac{\log \Big(\frac{     \Phi_{\beta}(y)^2}{     \Phi_{\beta}(y+\sigma)     \Phi_{\beta}(y-\sigma)}\Big)}{|\sigma|^{d+\beta}}\mathrm{d}\sigma.
\end{equation}
\end{lemma}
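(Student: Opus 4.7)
The plan is to reduce the estimate to a time-independent integral expression via the self-similar form \eqref{eq:selfsimilarform}, and then verify that the resulting supremum is finite using the kernel estimates \eqref{eq:behavior}, \eqref{eq:FSgradient}, \eqref{eq:FSHessian}.

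Step one: exploit self-similarity. Writing $\log G^{(\beta)}(t,x) = -\frac{d}{\beta}\log t + \log\Phi_{\beta}\big(x/t^{1/\beta}\big)$, the constant-in-$x$ term is annihilated by the fractional Laplacian applied in $x$. Using the symmetric pointwise definition, I compute
\begin{equation*}
(-\Delta)^{\beta/2}\big(\log G^{(\beta)}(t,\cdot)\big)(x)=-\frac{c_{\beta,d}}{2}\int_{\R^d}\frac{\log\Phi_\beta\big(\tfrac{x+y}{t^{1/\beta}}\big)+\log\Phi_\beta\big(\tfrac{x-y}{t^{1/\beta}}\big)-2\log\Phi_\beta\big(\tfrac{x}{t^{1/\beta}}\big)}{|y|^{d+\beta}}\,\mathrm{d}y.
\end{equation*}
Performing the substitution $\sigma=y/t^{1/\beta}$, the Jacobian gives $t^{d/\beta}$ and the denominator contributes $t^{(d+\beta)/\beta}$, producing an overall factor $t^{-1}$. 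Writing $\tilde{x}=x/t^{1/\beta}$ and rewriting the sum of logarithms as the logarithm of the ratio yields
\begin{equation*}
(-\Delta)^{\beta/2}\big(\log G^{(\beta)}(t,\cdot)\big)(x)=\frac{c_{\beta,d}}{2t}\int_{\R^d}\frac{\log\big(\Phi_\beta(\tilde{x})^2\big/\big(\Phi_\beta(\tilde{x}+\sigma)\Phi_\beta(\tilde{x}-\sigma)\big)\big)}{|\sigma|^{d+\beta}}\,\mathrm{d}\sigma.
\end{equation*}
Taking the supremum over $\tilde{x}\in\R^d$ gives exactly the constant $C_{LY}(\beta,d)/t$ announced in \eqref{eq:theconstant}, so the inequality \eqref{eq:fractionalheatkernelLiYau} holds by construction provided this supremum is finite.

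Step two, which is the main obstacle, is the uniform-in-$\tilde{x}$ finiteness of the integral appearing in \eqref{eq:theconstant}. I would split the $\sigma$-integral into $\{|\sigma|\le 1\}$ and $\{|\sigma|>1\}$. On $\{|\sigma|\le 1\}$, by symmetry of the integrand with respect to $\sigma\mapsto-\sigma$, a second-order Taylor expansion of $\log\Phi_\beta$ at $\tilde{x}$ shows that the numerator behaves like $-\sigma^\top\nabla^2\log\Phi_\beta(\tilde{x}+\theta\sigma)\sigma$ for some $\theta\in(-1,1)$, which cancels one power of $|\sigma|^2$ and leaves an integrable singularity of type $|\sigma|^{2-d-\beta}$ since $\beta<2$. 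The constant is controlled uniformly in $\tilde{x}$ using that $\nabla^2\log\Phi_\beta=\nabla^2\Phi_\beta/\Phi_\beta-(\nabla\Phi_\beta/\Phi_\beta)\otimes(\nabla\Phi_\beta/\Phi_\beta)$ is globally bounded on $\R^d$: on compact sets this follows from smoothness and strict positivity of $\Phi_\beta$, and on $\{|z|\ge 1\}$ the estimates \eqref{eq:behavior}, \eqref{eq:FSgradient}, \eqref{eq:FSHessian} give $|\nabla\Phi_\beta(z)|/\Phi_\beta(z)\lesssim|z|^{-1}$ and $\|\nabla^2\Phi_\beta(z)\|/\Phi_\beta(z)\lesssim|z|^{-2}$.

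For the tail $\{|\sigma|>1\}$, I would use \eqref{eq:behavior} in the logarithmic form $\log\Phi_\beta(z)=-\tfrac{d+\beta}{2}\log(1+|z|^2)+O(1)$ with an absolute $O(1)$-constant. Since $\log(1+|\tilde{x}\pm\sigma|^2)+\log(1+|\tilde{x}\mp\sigma|^2)-2\log(1+|\tilde{x}|^2)$ is bounded from above by $C\log(2+|\sigma|^2)$ uniformly in $\tilde{x}$ (as can be checked by distinguishing the regimes $|\sigma|\lesssim|\tilde{x}|$ and $|\sigma|\gtrsim|\tilde{x}|$ and using the elementary inequality $1+|\tilde{x}\pm\sigma|^2\le 2(1+|\tilde{x}|^2)(1+|\sigma|^2)$), the tail contribution is dominated by $\int_{|\sigma|>1}\log(2+|\sigma|^2)|\sigma|^{-d-\beta}\mathrm{d}\sigma<\infty$. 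Combining both regimes yields $C_{LY}(\beta,d)<\infty$, completing the proof. The technical crux throughout is turning the two-sided $\asymp$-estimates of \cite{BlGe,KimLim} into uniform-in-$\tilde{x}$ bounds on the integrand, which is cleanly achieved by the splitting above.
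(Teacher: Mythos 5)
Your proof is correct. Step one (the self-similar reduction to the time-independent integral $J(\tilde x)$) and the split of the $\sigma$-integral at $|\sigma|=1$ coincide with the paper. The tail estimate over $\{|\sigma|>1\}$ is also essentially the paper's $J_1$ argument, rephrased via the logarithmic form $\log\Phi_\beta(z)=-\frac{d+\beta}{2}\log(1+|z|^2)+O(1)$; the elementary inequality $1+|\tilde x\pm\sigma|^2\le 2(1+|\tilde x|^2)(1+|\sigma|^2)$ makes the uniformity in $\tilde x$ transparent without the paper's case distinction between $|\sigma|\lessgtr|y|$.

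Where you genuinely diverge is the near-origin part. The paper first applies $\log r\le r-1$, then Taylor-expands $\Phi_\beta$ itself, and is left estimating four separate terms of the form $\Phi_\beta(y)\,\sigma^\top\nabla^2\Phi_\beta\,\sigma$, $(\sigma\cdot\nabla\Phi_\beta)(\sigma^\top\nabla^2\Phi_\beta\,\sigma)$, $(\sigma^\top\nabla^2\Phi_\beta\,\sigma)^2$ and $(\sigma\cdot\nabla\Phi_\beta)^2$, each divided by $\Phi_\beta(y+\sigma)\Phi_\beta(y-\sigma)$; controlling that quotient forces the additional comparison $|y+\eta|^2\asymp|y|^2$, which holds only for $|y|\ge R$ and obliges the paper to deal with the compact region $|y|<R$ by a separate continuity argument. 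You instead Taylor-expand $\log\Phi_\beta$ directly, so that the symmetric second difference is controlled by a single quantity, $\|\nabla^2\log\Phi_\beta\|_\infty\,|\sigma|^2$, and the identity $\nabla^2\log\Phi_\beta=\nabla^2\Phi_\beta/\Phi_\beta-(\nabla\Phi_\beta/\Phi_\beta)^{\otimes 2}$ combined with \eqref{eq:behavior}, \eqref{eq:FSgradient}, \eqref{eq:FSHessian} yields a genuinely global bound $\|\nabla^2\log\Phi_\beta(z)\|\lesssim\min\{1,|z|^{-2}\}$. This makes the uniformity in $\tilde x$ automatic and dispenses with the continuity step; it is a real streamlining. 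One minor technical remark: the Lagrange remainder produces a \emph{sum} of two Hessian evaluations $\frac12\sigma^\top[\nabla^2\log\Phi_\beta(\tilde x+\theta_+\sigma)+\nabla^2\log\Phi_\beta(\tilde x-\theta_-\sigma)]\sigma$ rather than a single $\theta\in(-1,1)$ as you wrote, but this does not affect the bound.
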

\begin{proof}
We consider
\begin{align*}
\big( - \Delta\big)^{\frac{\beta}{2}}(\log G^{(\beta)}(t,\cdot))(x) &= \frac{c_{\beta,d}}{2} \int_{\R^d} \frac{2 \log G^{(\beta)}(t,x) - \log G^{(\beta)}(t,x+h)-\log G^{(\beta)}(t,x-h)}{|h|^{d+\beta}}\mathrm{d}h \\
&= \frac{c_{\beta,d}}{2} \int_{\R^d} \frac{\log \Big(\frac{G^{(\beta)}(t,x)^2}{G^{(\beta)}(t,x+h)\,G^{(\beta)}(t,x-h)}\Big)}{|h|^{d+\beta}}\mathrm{d}h.
\end{align*}
Using the self-similar form \eqref{eq:selfsimilarform} and setting $y=\frac{x}{t^{\frac{1}{\beta}}}$, we get that
\begin{align*}
 \int_{\R^d} \frac{\log \Big(\frac{G^{(\beta)}(t,x)^2}{G^{(\beta)}(t,x+h)G^{(\beta)}(t,x-h)}\Big)}{|h|^{d+\beta}}\mathrm{d}h 
&= \frac{1}{t} \int_{\R^d}\frac{\log \Big(\frac{     \Phi_{\beta}(y)^2}{     \Phi_{\beta}(y+\sigma)     \Phi_{\beta}(y-\sigma)}\Big)}{|\sigma|^{d+\beta}}\mathrm{d}\sigma,
\end{align*}
where we have substituted $\sigma=\frac{h}{t^{\frac{1}{\beta}}}$. Consequently, it suffices to show that the mapping $J:\R^d\to \R$ given by
\begin{equation*}
J(y)=\int_{\R^d}\frac{\log \Big(\frac{     \Phi_{\beta}(y)^2}{     \Phi_{\beta}(y+\sigma)     \Phi_{\beta}(y-\sigma)}\Big)}{|\sigma|^{d+\beta}}\mathrm{d}\sigma
\end{equation*}
is bounded. By continuity we only need to show that $J$ is bounded on $\R^d \setminus B_{R}(0)$ for some $R>2$. In the sequel, we assume that $|y|\geq R$. We write
\begin{align*}
J(y)&= \int_{\R^d \setminus B_1(0)}\frac{\log \Big(\frac{     \Phi_{\beta}(y)^2}{     \Phi_{\beta}(y+\sigma)     \Phi_{\beta}(y-\sigma)}\Big)}{|\sigma|^{d+\beta}}\mathrm{d}\sigma + \int_{B_1(0)} \frac{\log \Big(\frac{     \Phi_{\beta}(y)^2}{     \Phi_{\beta}(y+\sigma)     \Phi_{\beta}(y-\sigma)}\Big)}{|\sigma|^{d+\beta}}\mathrm{d}\sigma =: J_1(y)+J_2(y).
\end{align*}
First, we consider $J_1(y)$ and use the behaviour described by \eqref{eq:behavior}(with $t=1$) to find a constant $M>1$ such that
\begin{align*}
J_1(y) \leq \frac{d+\beta}{2}\,\int_{\R^d \setminus B_1(0)} \frac{\log \Big(M\frac{(1+|y+\sigma|^2)(1+|y-\sigma|^2)}{(1+|y|^2)^2}\Big)}{|\sigma|^{d+\beta}}\mathrm{d}\sigma.
\end{align*}
We have that
\begin{align*}
(1+|y+\sigma|^2)(1+|y-\sigma|^2) 
&\leq \big(1+2|y|^2 + 2|\sigma|^2)^2.
\end{align*}
In the case of $|y|> |\sigma|$, this yields that
\begin{equation*}
\frac{(1+|y+\sigma|^2)(1+|y-\sigma|^2)}{(1+|y|^2)^2} \leq \Big(\frac{1+4|y|^2}{1+|y|^2}\Big)^2 \leq M',
\end{equation*}
where $M'>1$ is some constant that is independent of $y$.
In the other case of $|y|\leq |\sigma|$, we have
\begin{equation*}
\frac{(1+|y+\sigma|^2)(1+|y-\sigma|^2)}{(1+|y|^2)^2}\leq \big(1+ 4|\sigma|^2\big)^2.
\end{equation*}
Putting these estimates together and using the monotonicity of the logarithm, we end up with
\begin{align*}
J_1(y) &\leq \frac{d+\beta}{2}\,\int_{B_{|y|}(0)\setminus B_1(0)} \frac{\log (M M')}{|\sigma|^{d+\beta}}\mathrm{d}\sigma + \frac{d+\beta}{2}\,\int_{\R^d \setminus B_{|y|}(0)} \frac{\log (M (1+4|\sigma|^2)^2)}{|\sigma|^{d+\beta}}\mathrm{d}\sigma \\
&\leq  \frac{d+\beta}{2}\,\int_{\R^d \setminus B_1(0)} \frac{\log \big(M^2 M' (1+4|\sigma|^2)^2\big)}{|\sigma|^{d+\beta}}\mathrm{d}\sigma \leq C_1 <\infty,
\end{align*}
where $C_1>0$ is some constant which is independent of $y$.

Now, we turn to $J_2(y)$. Using the inequality $\log r \leq r-1$, $r\in(0,\infty)$, we infer
\begin{equation}\label{eq:J2estimate}
J_2(y) \leq \int_{B_1(0)} \frac{     \Phi_{\beta}(y)^2 -      \Phi_{\beta}(y+\sigma)     \Phi_{\beta}(y-\sigma)}{     \Phi_{\beta}(y+\sigma)     \Phi_{\beta}(y-\sigma)}\frac{\mathrm{d}\sigma}{|\sigma|^{d+\beta}}.
\end{equation}
Given some $\eta \in B_1(0)$, we have by the Cauchy-Schwarz inequality that
\begin{align*}
1-\frac{2}{R}\leq 1- \frac{2|\eta|}{|y|} \leq 1- \frac{2|\eta|}{|y|}+\frac{|\eta|^2}{|y|^2}\leq  \frac{|y+\eta|^2}{|y|^2} \leq 1+ \frac{2|\eta|}{|y|}+\frac{|\eta|^2}{|y|^2}\leq 1+\frac{2}{R}+\frac{1}{R^2}.
\end{align*}
Consequently, since $R>2$, we observe that
\begin{equation}\label{eq:normofyestimate}
|y+\eta|^2 \asymp |y|^2,
\end{equation}
where the respective constants are in particular independent of $\eta$.
  
Further, by using Taylor's expansion we get that
\begin{align*}
     \Phi_{\beta}(y+\sigma) &=      \Phi_{\beta}(y) +  \sigma  \cdot \nabla     \Phi_{\beta}(y)^T + \frac{1}{2} \sigma \cdot \nabla^2      \Phi_{\beta}(y+\zeta_+ \sigma) \sigma , \\
     \Phi_{\beta}(y-\sigma)&=     \Phi_{\beta}(y) -  \sigma \cdot \nabla      \Phi_{\beta}(y)^T + \frac{1}{2} \sigma \cdot \nabla^2      \Phi_{\beta}(y-\zeta_- \sigma) \sigma ,
\end{align*}
where $\zeta_-,\zeta_+ \in [0,1]$. Then we get 
\begin{align*}
     \Phi_{\beta}(y)^2 -      \Phi_{\beta}(y+\sigma)     \Phi_{\beta}(y-\sigma) &= -\frac{     \Phi_{\beta}(y)}{2}\Big(\sigma \cdot \nabla^2      \Phi_{\beta}(y+\zeta_+ \sigma)\sigma+\sigma \cdot \nabla^2      \Phi_{\beta}(y-\zeta_- \sigma)\sigma\Big)\\
&\quad-\frac{\sigma \cdot \nabla      \Phi_{\beta}(y)^T}{2}\Big(\sigma \cdot \nabla^2      \Phi_{\beta}(y-\zeta_- \sigma)\sigma - \sigma \cdot \nabla^2      \Phi_{\beta}(y+\zeta_+ \sigma)\sigma \Big)\\
&\quad - \frac{1}{4}\big(\sigma \cdot  \nabla^2      \Phi_{\beta}(y+\zeta_+ \sigma)\sigma\big)\big(\sigma \cdot \nabla^2      \Phi_{\beta}(y-\zeta_-\sigma)\sigma\big)\\
&\quad + \big( \sigma \cdot \nabla      \Phi_{\beta}(y)^T \big)^2.
\end{align*}

Next, we aim to suitably estimate the integrand on the right hand side of \eqref{eq:J2estimate}. For that purpose, we will subsequently make use of the estimates {\eqref{eq:behavior} (with $t=1$),\eqref{eq:FSgradient}, \eqref{eq:FSHessian}}, \eqref{eq:normofyestimate} and the Cauchy-Schwarz inequality. Moreover, we will use the symbol `$\lesssim$' whenever the corresponding constant is independent of $|y|$. 

We have
\begin{align*}
\frac{    | \Phi_{\beta}(y)\, \sigma \cdot \nabla^2      \Phi_{\beta}(y+\zeta_+ \sigma)\sigma|}{    \Phi_{\beta}(y+\sigma)    \Phi_{\beta}(y-\sigma)}&\lesssim \Big( \frac{(1+|y+\sigma|^2)(1+|y-\sigma|^2)}{1+|y|^2}\Big)^{\frac{d+\beta}{2}} |\sigma|^2 \Vert \nabla^2     \Phi_{\beta}(y+\zeta_+\sigma) \Vert \\
&\lesssim \Big( \frac{|y+\sigma|^2|y-\sigma|^2}{|y|^2}\Big)^{\frac{d+\beta}{2}}\frac{|\sigma|^2}{|y+\zeta_+\sigma|^{d+\beta+2}} \lesssim \frac{|\sigma|^2}{|y|^2}.
\end{align*}
Clearly, the corresponding term with $-\zeta_-$ instead of $\zeta_+$ can be treated analogously. Next, we consider
\begin{align*}
&\frac{\big|\big( \sigma
\cdot \nabla     \Phi_{\beta}(y)^T\big)\big( \sigma \cdot \nabla^2     \Phi_{\beta}(y+\zeta_+\sigma)\sigma \big)\big|}{    \Phi_{\beta}(y+\sigma)    \Phi_{\beta}(y-\sigma)}\\
&\qquad\qquad\qquad\qquad\lesssim |\sigma|^3 |\nabla     \Phi_{\beta}(y)| \Vert \nabla^2     \Phi_{\beta}(y+\zeta_+\sigma)\Vert \big( (1+|y+\sigma|^2)(1+|y-\sigma|^2)\big)^{\frac{d+\beta}{2}}\\
&\qquad\qquad\qquad\qquad\lesssim \frac{|\sigma|^3 |y+\sigma|^{d+\beta}|y-\sigma|^{d+\beta}}{|y|^{d+\beta+1}|y+\zeta_+\sigma|^{d+\beta+2}} \lesssim \frac{|\sigma|^3}{|y|^3}
\end{align*}
and again, the corresponding term with $-\zeta_-$ instead of $\zeta_+$ can be treated analogously.
Furthermore, we have
\begin{align*}
&\frac{\big|\big(\sigma \cdot  \nabla^2     \Phi_{\beta}(y+\zeta_+ \sigma)\sigma\big)\big(\sigma \cdot \nabla^2     \Phi_{\beta}(y-\zeta_-\sigma)\sigma\big)\big|}{    \Phi_{\beta}(y+\sigma)    \Phi_{\beta}(y-\sigma)}\\
&\qquad\qquad\qquad\qquad\lesssim |\sigma|^4 \Vert \nabla^2     \Phi_{\beta}(y+\zeta_+\sigma)\Vert \Vert   \nabla^2  \Phi_{\beta}(y-\zeta_- \sigma)\Vert \big( (1+|y+\sigma|^2)(1+|y-\sigma|^2)\big)^{\frac{d+\beta}{2}}\\
&\qquad\qquad\qquad\qquad\lesssim \frac{|\sigma|^4 |y+\sigma|^{d+\beta} |y-\sigma|^{d+\beta}}{|y+\zeta_+\sigma|^{d+\beta+2} |y-\zeta_-\sigma|^{d+\beta+2}} \lesssim \frac{|\sigma|^4}{|y|^4}
\end{align*}
and last but not least
\begin{align*}
\frac{\big( \sigma \cdot \nabla     \Phi_{\beta}(y)^T \big)^2}{    \Phi_{\beta}(y+\sigma)    \Phi_{\beta}(y-\sigma)}&\lesssim |\sigma|^2 |\nabla     \Phi_{\beta}(y)|^2 \big( (1+|y+\sigma|^2)(1+|y-\sigma|^2)\big)^{\frac{d+\beta}{2}}\\
&\lesssim \frac{|\sigma|^2 |y+\sigma|^{d+\beta}|y-\sigma|^{d+\beta}}{|y|^{2d+2\beta+2}} \lesssim \frac{|\sigma|^2}{|y|^2}. 
\end{align*}
Putting everything together, we infer from \eqref{eq:J2estimate} that
\begin{equation*}
J_2(y) \lesssim \frac{1}{|y|^2} \int_{B_1(0)}|\sigma|^{2-d-\beta}\mathrm{d}\sigma \leq C_2 <\infty,
\end{equation*}
for some constant $C_2>0$ being independent of $y$.
\end{proof}

Observe that the proof of Lemma \ref{lem:frakheatkernelLiYau} also shows that $C_{LY}(\beta,d)$ is the smallest constant
among all $C>0$ satisfying 
\[
\big(-\Delta \big)^{\frac{\beta}{2}}(\log G^{(\beta)}(t,\cdot))(x) \leq \frac{C}{t},\quad t>0,\,x\in \iR^d.
\] 
In this sense the constant $C_{LY}(\beta,d)$ is optimal.

Combining Theorem \ref{thm:abstractmainresult} with Lemma \ref{lem:frakheatkernelLiYau}, we have shown the following Li-Yau type inequality for the fractional heat equation.
\begin{theorem}\label{thm:LiYaufrak}
Let $u:[0,T)\times \R^d \to (0,\infty)$ be a strong solution to the fractional heat equation \eqref{eq:FHE}. Then the Li-Yau type inequality
\begin{equation}\label{eq:fractionalLiYau}
\big(-\Delta\big)^\frac{\beta}{2} (\log u(t,\cdot))(x) \leq \frac{C_{LY}(\beta,d)}{t}
\end{equation}
holds at any $(t,x)\in (0,T)\times \R^d$, where the constant $C_{LY}(\beta,d)>0$ is given by \eqref{eq:theconstant}.
\end{theorem}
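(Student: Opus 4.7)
My plan is to realize Theorem \ref{thm:LiYaufrak} as a direct application of the abstract reduction principle of Theorem \ref{thm:abstractmainresult}. Concretely, I would set $L := -(-\Delta)^{\beta/2}$ (which is of the form \eqref{eq:MMSnonlocaloperator} with the singular kernel $c_{\beta,d}|x-y|^{-d-\beta}\mathrm{d}y$), take the reference measure $\mu$ to be Lebesgue measure on $\R^d$, identify the kernel of assumption (A1) with $p(t,x,y) := G^{(\beta)}(t,x-y)$, and choose $\varphi(t,x) := C_{LY}(\beta,d)/t$. With this dictionary the fractional heat equation \eqref{eq:FHE} becomes exactly \eqref{eq:heatequation}, and the target inequality \eqref{eq:fractionalLiYau} is precisely the abstract conclusion \eqref{eq:LiYauabstract}.

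The first step in the plan is to verify (A1) and (A2) in this concrete setting. For (A1), I would invoke the Widder-type theorem of \cite{BPSV}, which asserts that every positive strong solution of \eqref{eq:FHE} admits the convolution representation \eqref{eq:solutionfractionalheatequation}; the required measurability, the smoothness $G^{(\beta)}(t,\cdot)\in C^\infty(\R^d)$, and the identity $\partial_t G^{(\beta)} = L G^{(\beta)}$ are all standard properties of the fractional heat kernel. For (A2), I would use the pointwise bound on $|(-\Delta)^{\beta/2}G^{(\beta)}(t,\cdot)(x)|$ from \cite{VPQR} together with the integrability furnished by \eqref{eq:asympsolutionintegral}, so that differentiation under the integral sign is justified by dominated convergence, as already noted after \eqref{eq:asympsolutionintegral}.

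The second step is to feed in the kernel estimate. Lemma \ref{lem:frakheatkernelLiYau} provides precisely what Theorem \ref{thm:abstractmainresult} requires via \eqref{eq:heatkernelLiYau}: since $x\mapsto p(t,x,y)$ is a translate of $G^{(\beta)}(t,\cdot)$ and the fractional Laplacian is translation invariant, I would observe
\begin{equation*}
-L(\log p(t,\cdot,y))(x) \;=\; (-\Delta)^{\beta/2}\bigl(\log G^{(\beta)}(t,\cdot)\bigr)(x-y) \;\le\; \frac{C_{LY}(\beta,d)}{t}
\end{equation*}
for every $(t,x)\in(0,T)\times\R^d$ and every $y\in\R^d$. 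Applying Theorem \ref{thm:abstractmainresult} then yields $-L(\log u(t,\cdot))(x)\le C_{LY}(\beta,d)/t$, which is exactly \eqref{eq:fractionalLiYau}.

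Since Lemma \ref{lem:frakheatkernelLiYau} has already absorbed all of the hard analysis, there is no genuine obstacle left at this stage. The only bookkeeping item is the existence of the pointwise values $L(\log p(t,\cdot,y))(x)$ and $L(\log u(t,\cdot))(x)$ demanded by Theorem \ref{thm:abstractmainresult}; both follow from the smoothness and strict positivity of $G^{(\beta)}(t,\cdot)$ and $u(t,\cdot)$ (the latter inherited through the convolution representation \eqref{eq:solutionfractionalheatequation}), combined with the quadratic vanishing of $\Upsilon$ at the origin that tames the singularity of the kernel in $\Psi_\Upsilon(\log\,\cdot\,)$, exactly as in the remark following \eqref{eq:fundamentalidentity}.
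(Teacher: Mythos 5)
Your proposal is correct and follows precisely the paper's argument: the paper derives Theorem \ref{thm:LiYaufrak} by verifying (A1) via the Widder-type theorem of \cite{BPSV} and (A2) via the kernel bound of \cite{VPQR} with dominated convergence, and then combining Theorem \ref{thm:abstractmainresult} with the heat kernel estimate of Lemma \ref{lem:frakheatkernelLiYau}, which is exactly the dictionary you set up. Your observation about translation invariance reducing $-L(\log p(t,\cdot,y))(x)$ to $(-\Delta)^{\beta/2}(\log G^{(\beta)}(t,\cdot))(x-y)$ is the implicit step the paper uses as well.
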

In the special case $\beta=1$ an analytic expression in closed form is available for the fractional heat kernel, which allows to explicitly compute the constant of the Li-Yau inequality from Theorem \ref{thm:LiYaufrak}.
\begin{proposition}
For $\beta=1$ the Li-Yau constant \eqref{eq:theconstant} is given by
\begin{equation}\label{eq:constantforbetaone}
C_{LY}(1,d)= \frac{\pi \, d(d+1)\, c_{1,d}\, \omega_d}{2} = \frac{d(d+1)}{2B\big(\frac{d+1}{2},\frac{1}{2}\big)},
\end{equation}
where $\omega_d$ denotes the volume of the $d$-dimensional unit ball in $\R^d$ and $B$ the Beta function.
\end{proposition}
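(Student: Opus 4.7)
The plan is to substitute the explicit formula $\Phi_1(y)=c_{1,d}(1+|y|^2)^{-(d+1)/2}$ from \eqref{eq:betaeq1kernel} into the definition \eqref{eq:theconstant}, reduce the $d$-dimensional supremum to a tractable one-dimensional radial integral, evaluate that integral in closed form, and verify that the supremum is attained at $y=0$.

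First, the integrand in \eqref{eq:theconstant} becomes $\tfrac{d+1}{2}\log\bigl[(1+|y+\sigma|^2)(1+|y-\sigma|^2)/(1+|y|^2)^2\bigr]$. The key algebraic observation is that $(1+|y+\sigma|^2)(1+|y-\sigma|^2)=(1+|y|^2+|\sigma|^2)^2-4(y\cdot\sigma)^2$. Writing $\sigma=tw$ with $t>0$, $w\in S^{d-1}$, and setting $\lambda=1+|y|^2$, $\alpha=2y\cdot w$, and $\mu=\sqrt{\lambda-\alpha^2/4}>0$ (positive by Cauchy–Schwarz), the product factors as $[(t-\alpha/2)^2+\mu^2][(t+\alpha/2)^2+\mu^2]$. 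After the $t^{d-1}/t^{d+1}=t^{-2}$ cancellation, the task reduces to computing
\[
I(\alpha,\lambda):=\int_0^\infty\frac{F(t)-F(0)}{t^2}\,dt,\qquad F(t)=\log\bigl[(t^2+\lambda-\alpha t)(t^2+\lambda+\alpha t)\bigr],\ F(0)=2\log\lambda.
\]

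Integration by parts (the boundary terms vanish since $F(t)-F(0)=O(t^2)$ near $0$ and $O(\log t)$ at infinity) converts $I(\alpha,\lambda)$ into $\int_0^\infty F'(t)/t\,dt$. Setting $A=(t-\alpha/2)^2+\mu^2$ and $B=(t+\alpha/2)^2+\mu^2$, one checks $A+B=2t^2+2\lambda$, which yields the clean identity $F'(t)/t=2(A+B)/(AB)-2\alpha^2/(AB)$. Elementary arctan computations give $\int_0^\infty(A^{-1}+B^{-1})\,dt=\pi/\mu$, and a partial-fraction decomposition for $(AB)^{-1}$ produces $\int_0^\infty(AB)^{-1}\,dt=\pi/(4\lambda\mu)$; the individual pieces here have divergent logarithmic antiderivatives which must be combined pairwise to reveal the cancellations. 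Combining, $I(\alpha,\lambda)=2\pi/\mu-\alpha^2\pi/(2\lambda\mu)=\pi(4\lambda-\alpha^2)/(2\lambda\mu)=2\pi\mu/\lambda$, using $4\mu^2=4\lambda-\alpha^2$. Integrating over the sphere,
\[
J(y):=\int_{\R^d}\frac{\log\bigl[\Phi_1(y)^2/(\Phi_1(y+\sigma)\Phi_1(y-\sigma))\bigr]}{|\sigma|^{d+1}}\,d\sigma=\frac{\pi(d+1)}{1+|y|^2}\int_{S^{d-1}}\sqrt{1+|y|^2-(y\cdot w)^2}\,dw.
\]

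To locate the supremum in \eqref{eq:theconstant}, I use Cauchy–Schwarz: $(y\cdot w)^2\le|y|^2$, whence $\sqrt{1+|y|^2-(y\cdot w)^2}\le\sqrt{1+|y|^2}$. Together with $|S^{d-1}|=d\omega_d$ this gives $J(y)\le\pi(d+1)d\omega_d/\sqrt{1+|y|^2}\le\pi(d+1)d\omega_d=J(0)$, with equality iff $y=0$. Therefore $C_{LY}(1,d)=\tfrac{c_{1,d}}{2}J(0)=\pi d(d+1)c_{1,d}\omega_d/2$, and the equivalent Beta-function expression in \eqref{eq:constantforbetaone} follows from the Gamma identities $c_{1,d}=\Gamma((d+1)/2)/\pi^{(d+1)/2}$, $\omega_d=\pi^{d/2}/\Gamma(d/2+1)$, and $B(x,y)=\Gamma(x)\Gamma(y)/\Gamma(x+y)$. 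The main obstacle is the $t$-integration: the individual terms of the partial-fraction expansion produce logarithmic divergences, and the delicate step is organizing them so the cancellations that yield $2\pi\mu/\lambda$ become transparent.
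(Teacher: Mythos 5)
Your proof is correct, and it takes a genuinely different route from the paper's. The paper locates the supremum at $y=0$ by a pointwise monotonicity argument: it discards the $-4(y\cdot\sigma)^2$ cross term (so the resulting upper bound is an equality exactly at $y=0$), then shows the remaining function of $z=|y|^2$, namely $z\mapsto (2az+2a+a^2)/(1+z)^2$ with $a=|\sigma|^2$, is decreasing on $[0,\infty)$; only after that does it evaluate the radial integral at $y=0$ via $\int_0^\infty \log(1+r^2)/r^2\,dr=\pi$. You instead compute $J(y)$ in closed form for \emph{every} $y$: after the radial reduction, integration by parts, and the arctan/partial-fraction (equivalently, residue) evaluations, you obtain $I(\alpha,\lambda)=2\pi\mu/\lambda$ and hence $J(y)=\tfrac{\pi(d+1)}{1+|y|^2}\int_{S^{d-1}}\sqrt{1+|y|^2-(y\cdot w)^2}\,dw$, from which $J(y)\le J(0)$ follows instantly by Cauchy--Schwarz. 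Your argument thus buys more --- an explicit formula for the quantity $\sup$'d over, equivalently for $(-\Delta)^{1/2}(\log G^{(1)}(t,\cdot))(x)$ at every $(t,x)$ up to a spherical integral --- whereas the paper's route is shorter and avoids the $\alpha\neq 0$ contour/partial-fraction work entirely by not attempting a closed form away from the maximizer. Both proofs then evaluate $J(0)=\pi(d+1)d\,\omega_d$ and land on $C_{LY}(1,d)=\tfrac{\pi\,d(d+1)\,c_{1,d}\,\omega_d}{2}$.
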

\begin{proof}
First, we show that 
\begin{equation}\label{eq:beta1maxat0}
\sup\limits_{x \in \R^d}\int_{\R^d}\frac{\log \Big(\frac{     \Phi_{1}(x)^2}{\Phi_{1}(x+\sigma)\Phi_{1}(x-\sigma)}\Big)}{|\sigma|^{d+1}}\mathrm{d}\sigma = \int_{\R^d}\frac{\log \Big(\frac{\Phi_{1}(0)^2}{\Phi_{1}(\sigma)     \Phi_{1}(-\sigma)}\Big)}{|\sigma|^{d+1}}\mathrm{d}\sigma.
\end{equation}
Let $x\in \R^d$ be fixed and $\sigma \in \R^d$ arbitrary. Using the explicit representation \eqref{eq:betaeq1kernel}, we get that
\begin{align*}
\log \Big(\frac{\Phi_{1}(x)^2}{\Phi_{1}(x+\sigma)     \Phi_{1}(x-\sigma)}\Big) = \frac{d+1}{2}\log\Big(\frac{(1+|x+\sigma|^2)(1+|x-\sigma|^2)}{(1+|x|^2)^2}\Big).
\end{align*}
We have
\begin{align*}
\frac{(1+|x+\sigma|^2)(1+|x-\sigma|^2)}{(1+|x|^2)^2} &= 1 + \frac{2|\sigma|^2 + |\sigma|^4 + 2 |x|^2|\sigma|^2 - 4 (x\cdot \sigma)^2}{1+2|x|^2+|x|^4}\\
&\leq 1 + \frac{2|\sigma|^2 + |\sigma|^4 + 2 |x|^2|\sigma|^2}{1+2|x|^2+|x|^4}.
\end{align*}
We set $a:=|\sigma|^2$ and consider $h_a:[0,\infty)\to [0,\infty)$ given by $h_a(z)= \frac{2az+2a+a^2}{1+2z+z^2}$. Note that  
\begin{align*}
\big(1+h_a(0)\big)^{\frac{d+1}{2}} = \big(1+2|\sigma|^2+|\sigma|^4\big)^\frac{d+1}{2} = \frac{\Phi_1(0)^2}{\Phi_1(\sigma)\Phi_1(-\sigma)}.
\end{align*}
Hence, in order to establish \eqref{eq:beta1maxat0} it suffices to show that $h_a$ is decreasing on $[0,\infty)$ for any $a\in [0,\infty)$. We have that 
\begin{align*}
h_a'(z) = \frac{(1+2z+z^2)2a - (2az+2a+a^2)(2z+2)}{(1+2z+z^2)^2},
\end{align*}
which is non-positive if and only if 
\begin{equation*}
\frac{1+2z+z^2}{2z+2}-z \leq 1+\frac{a}{2}.
\end{equation*}
This inequality holds true since 
\begin{align*}
\frac{1+2z+z^2}{2z+2}-z = \frac{1-z^2}{2z+2} = \frac{1-z}{2}\leq \frac{1}{2}.
\end{align*}
Consequently, \eqref{eq:beta1maxat0} is established. Due to $\Phi_1$ being radial symmetric, the calculation of the corresponding integral boils down to the one-dimensional case. More precisely, we have
\begin{align*}
\int_{\R^d}\frac{\log \Big(\frac{\Phi_{1}(0)^2}{\Phi_{1}(\sigma)     \Phi_{1}(-\sigma)}\Big)}{|\sigma|^{d+1}}\mathrm{d}\sigma = 2 \,\frac{d+1}{2}\, d\, \omega_d \int_0^\infty \frac{\log \big( 1+r^2)}{r^2}\mathrm{d}r = \pi\, d(d+1) \, \omega_d,
\end{align*}
where the latter follows from an elementary calculation. The right-hand side in \eqref{eq:constantforbetaone} follows from $\omega_d=\frac{\pi^\frac{d}{2}}{\Gamma(\frac{d}{2}+1)}$ and $\Gamma(\frac{1}{2})=\sqrt{\pi}$, which yields
\begin{align*}
\pi  c_{1,d} \omega_d =  \sqrt{\pi} \frac{\Gamma(\frac{d+1}{2})}{\Gamma(\frac{d}{2}+1)} =  \frac{\Gamma(\frac{1}{2})\Gamma(\frac{d+1}{2})}{\Gamma(\frac{d}{2}+1)} = \frac{1}{B\big(\frac{d+1}{2},\frac{1}{2}\big)}.
\end{align*}
\end{proof}

\begin{remark}
It is still an open question whether the Li-Yau constant $C_{LY}(\beta,d)$ tends to the Li-Yau constant $\frac{d}{2}$ from the
classical heat equation as $\beta\to 2$.
\end{remark}

As highlighted throughout Section \ref{sec:abstractsection} our approach merely depends on the specific structure of the operator than on the state space. Therefore it is natural to ask whether fractional powers of the Laplace-Beltrami operator on a Riemannian
manifold do fit into our setting. The question of suitable representations of fractional powers of the Laplace-Beltrami operator has been addressed in the context of compact
manifolds in \cite{ACM} and also for hyperbolic spaces in \cite{BGS}. These formulas
are based on well-established ways to represent fractional powers of suitable operators like the Laplace-Beltrami operator,
e.g.\ via the heat semigroup and Gaussian estimates (\cite{ACM}) or the harmonic extension method (\cite{BGS}).
In the discrete setting, it has been shown by \cite{CRSTV} that fractional powers of the discrete Laplacian with state space $\iZ$ can be written in the form described in Section \ref{sec:abstractsection}. However, it seems to be difficult to establish
suitable bounds on the heat kernel in a more general Riemannian setting that allow to derive a Li-Yau inequality (with an explicit
function $\varphi(t,x)$) via the reduction principle, Theorem \ref{thm:abstractmainresult}.

Another interesting research direction is gradient estimates for more complicated equations, which 
may comprise non-local terms like a fractional Laplacian (not necessarily as the leading order operator). 
We refer to the recent work \cite{CDFGV}, where it could be also interesting to include a fractional Laplacian
in the setting described in lines 2-9 on page 437.
\section{Illustration of the discrete case}\label{sec:discrete}
Let us now discuss an application of the general principle developed in Section \ref{sec:abstractsection} that is very different from the one of the last section. In the case that $M$ is a finite set, the operator $L$ can be viewed as the generator of a continuous-time Markov chain.  More precisely, $L$ induces the generator matrix (which is commonly called $Q$-matrix) given by $Q=\big(q(x,y)\big)_{x,y \in M}$ where $q(x,\cdot)$ denotes the density of the measure $k(x,\cdot)$ with respect to the counting measure on $M\setminus \{x\}$. Further, one sets $q(x,x) = -\sum_{y \neq x} q(x,y)$. In this setting, the kernel $p(t,x,y)$ in assumption (A1) is given by the transition probabilities and assumption (A2) is obviously satisfied. The transition probabilities are given by the entries of the matrix exponential $e^{tQ}$. This, and much more information on the theory of continuous-time Markov chains can be found, for instance, in \cite{Nor}. 

We illustrate the finite state space case by the example of the (unweighted) complete graph $K_n$. In this case, the corresponding $Q$-matrix is given by the entries ($x,y\in \{1,2,\ldots,n\}$)
\begin{equation}
q(x,y)= \left\{
\begin{array}{l@{\;:\;}l}
1 & x\neq y\\
-(n-1) & x=y.
\end{array}\right .
\end{equation}
One readily checks that $0$ is an eigenvalue of $Q$ of multiplicity $1$ with eigenvector $(1,...,1)^T$ and $-n$ is an eigenvalue of $Q$ of multiplicity $n-1$ with eigenvectors $(-1,e_j^{(n-1)})^T$, $j \in \{1,...,n-1\}$, where $e_j^{(n-1)}$ denotes the $j$-th unit vector in $\R^{n-1}$.
From that, one determines the transition probabilities to be given by
\begin{equation*}
p(t,x,y) = \left\{\begin{array}{l@{\;:\;}l}
\frac{1-e^{-nt}}{n} & x\neq y, \\
\frac{1+(n-1)e^{-nt}}{n} & x=y.
\end{array}\right.
\end{equation*}
Now, we have
\begin{equation*}
- L \big(\log p(t,\cdot,y)\big)(x) = \sum_{z \neq x} \log \frac{p(t,x,y)}{p(t,z,y)} = \left\{\begin{array}{l@{\;:\;}l}
(n-1) \log \big( \frac{1+(n-1)e^{-nt}}{1-e^{-nt}}\big) & x=y\\
\log \big(\frac{1-e^{-nt}}{1+(n-1)e^{-nt}}\big) & x\neq y,
\end{array}\right.
\end{equation*}
which yields that $-L \big(\log (p)(t,\cdot,y)\big)(x)$ is maximal if $x=y$ for any fixed $t>0$. 

Applying Theorem \ref{thm:abstractmainresult}, we obtain the following optimal result.
\begin{theorem}\label{thm:KnLiYau}
Let $u:[0,\infty)\times M \to (0,\infty)$ be a solution to $\partial_t u = L u$ on $(0,\infty)\times M$, where $L$ denotes  the generator of the continuous-time Markov chain such that  the complete graph $K_n$, $n\geq 2$, is the corresponding underlying graph. Then the Li-Yau inequality
\begin{equation*}
- L \big( \log u(t,\cdot))(x)\leq (n-1) \log \big(\frac{1+(n-1)e^{-nt}}{1-e^{-nt}}\big)
\end{equation*}
holds at any $(t,x) \in (0,\infty)\times M$.
\end{theorem}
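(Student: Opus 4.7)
The plan is to apply the reduction principle of Theorem \ref{thm:abstractmainresult} with the transition kernel $p(t,x,y)$ that has just been made explicit above the statement. Since $M=\{1,\dots,n\}$ is finite and $\mu$ is counting measure, the matrix exponential $p(t,x,y)=(e^{tQ})_{x,y}$ automatically satisfies $\partial_t p(t,\cdot,y)=L(p(t,\cdot,y))$ together with the representation formula $u(t,x)=\sum_{y\in M} p(t,x,y)\,u_0(y)$, so assumption (A1) holds; assumption (A2) is immediate because differentiation commutes with a finite sum.

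The heart of the argument is to identify the sharpest admissible $\varphi(t,x)$ in \eqref{eq:heatkernelLiYau}. From the explicit two-case expression for $-L(\log p(t,\cdot,y))(x)$ displayed just above the theorem, one observes that for fixed $t>0$ the value at $x=y$ equals
\[
(n-1)\log\Bigl(\frac{1+(n-1)e^{-nt}}{1-e^{-nt}}\Bigr),
\]
which is strictly positive, whereas the value at $x\neq y$ equals $\log\bigl(\frac{1-e^{-nt}}{1+(n-1)e^{-nt}}\bigr)$, i.e.\ the negative of the same logarithm (and hence negative). Thus the supremum over $y\in M$ is attained at $y=x$ and equals $\varphi(t):=(n-1)\log\bigl(\frac{1+(n-1)e^{-nt}}{1-e^{-nt}}\bigr)$, which does not depend on $x$. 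With this choice of $\varphi$, the hypothesis \eqref{eq:heatkernelLiYau} holds for every $y\in M$, and Theorem \ref{thm:abstractmainresult} immediately yields the stated Li-Yau bound.

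No genuine obstacle arises in the present argument: all of the hard analytic work is already packaged in Lemma \ref{lem:generalintegralestimate} and Theorem \ref{thm:abstractmainresult}, and the proof reduces to closed-form manipulation of the matrix exponential. The only modest care point is to verify that the $x\neq y$ case of $-L(\log p(t,\cdot,y))(x)$ does not exceed the $x=y$ case, which is transparent from the sign consideration above. Optimality of the constant is seen by testing against the heat kernel itself: taking a positive initial datum concentrated near a point $y_0\in M$ produces, in the limit, the solution $u(t,\cdot)=p(t,\cdot,y_0)$, which saturates the inequality at $x=y_0$ because \eqref{eq:heatkernelLiYau} holds with equality for that pair.
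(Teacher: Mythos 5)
Your proposal is correct and follows essentially the same route as the paper: compute the transition probabilities for $K_n$ via the matrix exponential, evaluate $-L(\log p(t,\cdot,y))(x)$ in the two cases $x=y$ and $x\neq y$, note by the sign consideration that the maximum over $y$ is attained at $y=x$, and invoke Theorem~\ref{thm:abstractmainresult}. The verification of (A1)--(A2) in the finite setting and the optimality remark are also consistent with what the paper observes.
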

Let us compare this result with what has been obtained in \cite{DKZ} in the case of the complete graph $K_n$. The method of \cite{DKZ} relies on the curvature-dimension condition $CD(F;0)$ (where $F$ is a so called $CD$-function). It is shown there, that $CD(F;0)$ implies a Li-Yau inequality of the form 
\begin{equation*}
- L \big( \log u(t,\cdot)\big)(x) \leq \varphi(t),
\end{equation*}
where the so-called relaxation function $\varphi:(0,\infty)\to(0,\infty)$ is defined as the unique positive solution of 
\begin{equation}\label{eq:relaxationsfunction}
\varphi'(t) + F(\varphi(t)) = 0,\quad t>0,
\end{equation}
such that $\lim\limits_{t\to 0^+}\varphi(t)=\infty$. It is important to notice that for the uniqueness part, there is no need to assume that $\frac{F(r)}{r}$ is increasing on $(0,\infty)$. This can be seen from the proof of \cite[Lemma 3.5]{DKZ}.

It has been calculated in \cite{DKZ} that the function $F(r)=(n-1)\big(e^\frac{r}{n-1}-(n-1)e^\frac{-r}{n-1}+n-2\big)
$, $r\geq 0$, satisfies the functional inequality that defines the curvature-dimension condition but is not a $CD$-function in general since $\frac{F(r)}{r}$ is not increasing near $0$. Since this property is of eminent importance for the method of \cite{DKZ}, one needs to find a suitable lower bound for $F$ which serves as a $CD$-function. This is when optimality in general gets lost. Remarkably, Theorem \ref{thm:KnLiYau} shows that in this situation this is only an obstacle due to the method of \cite{DKZ}, in fact the function $F$ is tailor-made for the Li-Yau inequality of Theorem \ref{thm:KnLiYau}. Indeed, we can show that 
\begin{equation*}
\varphi(t)=(n-1) \log\big(\frac{1+(n-1)e^{-nt}}{1-e^{-nt}}\big)
\end{equation*}
is the relaxation function to $F$, in the sense that $\varphi$ is the unique positive solution of \eqref{eq:relaxationsfunction} with $\lim\limits_{t\to 0^+}\varphi(t)=\infty$.

More precisely, we have
\begin{align*}
\varphi'(t) &= \frac{(n-1)n e^{-nt}}{(1+(n-1)e^{-nt})(1-e^{-nt})} \big( (1-n) (1- e^{-nt})-(1+(n-1)e^{-nt})\big) \\
&= - \frac{n^2(n-1)e^{-nt}}{(1+(n-1)e^{-nt})(1-e^{-nt})}
\end{align*}
and besides that
\begin{align*}
&F(\varphi(t)) = (n-1) \Big( \frac{1+(n-1)e^{-nt}}{1-e^{-nt}} - (n-1) \frac{1-e^{-nt}}{1+(n-1)e^{-nt}} + n-2 \Big) \\
&= \frac{(n-1)  \Big( (1+(n-1)e^{-nt})^2 - (n-1)(1-e^{-nt})^2 + (n-2)(1-e^{-nt})(1+(n-1)e^{-nt})\Big)}{(1+(n-1)e^{-nt})(1-e^{-nt})} \\
&= \frac{(n-1)\Big( 2-n + 4(n-1) e^{-nt} - (n-1)(n-2)e^{-2nt} + (n-2)\big( 1+(n-2)e^{-nt} - (n-1)e^{-2nt}\big)\Big)}{(1+(n-1)e^{-nt})(1-e^{-nt})}\\
&= \frac{n^2 (n-1)e^{-nt}}{(1+(n-1)e^{-nt})(1-e^{-nt})}.
\end{align*}

\section{Harnack inequalities} \label{HarnackInequ}
One of the main reasons for which Li and Yau \cite{LY} proved their inequality was to deduce from it their fundamental
Harnack inequality. In the parabolic case, a Harnack inequality is a pointwise estimate of the form
\begin{equation}\label{eq:Harnack}
u(t_1,x_1)\leq u(t_2,x_2) C(t_1,t_2,x_1,x_2),\quad 0<t_1<t_2<\infty,\, x_1,x_2\in M,
\end{equation}
where $u$ is a positive solution to the corresponding diffusion/heat equation on the state space $M$, and the constant in 
\eqref{eq:Harnack} may also depend on parameters of the space $M$ like dimension and curvature bounds.
For example, positive solutions of the heat equation on $\iR^d$
satisfy the following sharp inequality
\begin{equation} \label{Har1}
u(t_1,x_1)\le u(t_2,x_2) \Big(\frac{t_2}{t_1}\Big)^{d/2} e^{\frac{|x_1-x_2|^2}{4(t_2-t_1)}},\quad 0<t_1<t_2,
\,x_1,\,x_2\in \iR^d,
\end{equation}
see e.g.\ \cite{AB}. A corresponding inequality is also valid on complete $d$-dimensional Riemannian manifolds $M$ with nonnegative
Ricci curvature, see \cite{LY}.
 
The main idea to derive such estimates for the classical heat equation on a Riemannian manifold from the Li-Yau inequality is to integrate along geodesics in space-time and use the classical chain rule. In the context of jump processes it is natural to replace this integration along 
continuous paths with appropriate jumps, especially if respective paths in space-time are not available at all like in the discrete setting. The latter situation was studied in the works \cite{BHLLMY15}, \cite{DKZ} and \cite{Mun}, where Harnack inequalities have been derived from Li-Yau estimates. 

In order to highlight the additional difficulty that arises in the space-continuous setting, we briefly repeat the core argument from the discrete case of how Li-Yau estimates can be used in order to derive Harnack estimates of the form \eqref{eq:Harnack}. Recall Remark \ref{differentialHarnack}, which states that the non-local Li-Yau inequality $-L \log u \leq \varphi(t)$ can be reformulated as
\[
\partial_t \log u \geq \Psi_\Upsilon(\log u) - \varphi(t).
\]
With this at hand, one proceeds as follows. Given $0<t_1<t_2$ one has for arbitrary $s\in [t_1,t_2]$
\begin{align*}
\log \frac{u(t_1,x_1)}{u(t_2,x_2)} &= \log \frac{u(t_1,x_1)}{u(s,x_1)} + \log \frac{u(s,x_1)}{u(s,x_2)} + \log \frac{u(s,x_2)}{u(t_2,x_2)}\\
&= - \int_{t_1}^s \partial_t \log u(t,x_1) \mathrm{d}t +
\log \frac{u(s,x_1)}{u(s,x_2)} - \int_{s}^{t_2} \partial_t \log u(t,x_2) \mathrm{d}t\\
&\leq \int_{t_1}^s \big( \varphi(t)-\Psi_{\Upsilon}(\log  u)(t,x_1)\big)\mathrm{d}t + \log \frac{u(s,x_1)}{u(s,x_2)} + \int_s^{t_2} \big(\varphi(t)-\Psi_\Upsilon(\log u)(t,x_2)\big)\mathrm{d}t\\
&= \int_{t_1}^{t_2} \varphi(t)\mathrm{d}t + \log \frac{u(s,x_1)}{u(s,x_2)} - \int_{t_1}^s \Psi_\Upsilon (\log u) (t,x_1)\mathrm{d}t - \int_{s}^{t_2} \Psi_\Upsilon (\log u) (t,x_2) \mathrm{d}t.
\end{align*}
Note that it suffices to find a control
\[
\log \frac{u(s,x_1)}{u(s,x_2)} - \int_{t_1}^s \Psi_\Upsilon (\log u) (t,x_1)\mathrm{d}t - \int_{s}^{t_2} \Psi_\Upsilon (\log u) (t,x_2) \mathrm{d}t \leq \tilde{C}(t_1,t_2,x_1,x_2)
\]
in order to derive a bound of the form \eqref{eq:Harnack}. In the discrete setting this can be achieved with a quite rough estimate. Indeed, let $M$ be discrete and assume that $k(x_2,x_1)>0$, then we observe
\begin{align}
&\log \frac{u(s,x_1)}{u(s,x_2)} - \int_{t_1}^s \Psi_\Upsilon (\log u) (t,x_1)\mathrm{d}t - \int_{s}^{t_2} \Psi_\Upsilon (\log u) (t,x_2) \mathrm{d}t \nonumber\\
&\qquad\qquad\leq \log \frac{u(s,x_1)}{u(s,x_2)} - k(x_2,x_1) \int_s^{t_2} \Upsilon(\log u(t,x_1)-\log u(t,x_2))\mathrm{d}t,
\label{controlneeded}
\end{align}
from which a suitable estimate can then be derived, see \cite[Section 6]{DKZ}. In the argument, one chooses, among other things, $s\in[t_1,t_2]$
in such a way that the right-hand side in \eqref{controlneeded} is minimized over $[t_1,t_2]$, and one uses the inequality $\Upsilon(z)\ge \frac{1}{2}z^2$,
$z\ge 0$.

Combining the proof from \cite[Section 6]{DKZ} with our findings from Section \ref{sec:discrete} we arrive at the following Harnack inequality for positive solutions of the heat equation on complete graphs.
\begin{corollary}
Let $u:[0,\infty)\times M \to (0,\infty)$ be a solution to $\partial_t u= Lu$ on $(0,\infty)\times M$, where $L$ denotes the generator of the continuous-time Markov chain such that the complete graph $K_n$, $n\geq 2$, is the corresponding underlying graph. Then the Harnack inequality 
\begin{equation}
u(t_1,x_1)\leq u(t_2,x_2) \exp \Big[ (n-1)\int_{t_1}^{t_2} \log\Big( \frac{1+(n-1)e^{-nt}}{1-e^{-nt}}\Big) \mathrm{d}t + \frac{2}{t_2-t_1}\Big]
\end{equation}
holds true for any $0<t_1<t_2<\infty$ and $x_1,x_2 \in M$.
\end{corollary}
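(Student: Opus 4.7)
The plan is to apply the template described in Section \ref{HarnackInequ}, combining the Li--Yau estimate of Theorem \ref{thm:KnLiYau} with a space--time telescoping argument. First, using \eqref{eq:logchainrulesultion}, I would rewrite Theorem \ref{thm:KnLiYau} as the differential Harnack inequality (cf.\ Remark \ref{differentialHarnack})
\[
\partial_t \log u(t,x) \ge \Psi_\Upsilon(\log u)(t,x) - \varphi(t), \qquad \varphi(t) := (n-1)\log\frac{1+(n-1)e^{-nt}}{1-e^{-nt}},
\]
valid for every $t>0$ and $x\in M$.

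Fix $0<t_1<t_2<\infty$ and $x_1,x_2\in M$. The case $x_1=x_2$ is essentially trivial, so assume $x_1\ne x_2$ and set $G(t):=\log u(t,x_1)-\log u(t,x_2)$. Following the computation already spelt out in Section \ref{HarnackInequ}, for every $s\in[t_1,t_2]$ the telescoping identity combined with the differential Harnack, the positivity of $\Psi_\Upsilon(\log u)(\cdot,x_1)$, and the lower bound $\Psi_\Upsilon(\log u)(t,x_2)\ge\Upsilon(G(t))$ (valid because every off-diagonal edge weight of $K_n$ equals $1$) yields
\[
\log\frac{u(t_1,x_1)}{u(t_2,x_2)} \le \int_{t_1}^{t_2}\varphi(t)\,\mathrm{d}t + G(s) - \int_{s}^{t_2}\Upsilon(G(t))\,\mathrm{d}t.
\]

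The main remaining step is the elementary inequality
\[
\inf_{s\in[t_1,t_2]}\Big(G(s) - \int_{s}^{t_2}\Upsilon(G(t))\,\mathrm{d}t\Big) \le \frac{2}{t_2-t_1}
\]
for any continuous $G:[t_1,t_2]\to\R$. If $G(s_0)\le 0$ for some $s_0\in[t_1,t_2]$, choosing $s=s_0$ and using $\Upsilon\ge 0$ gives the bound immediately. Otherwise $G>0$ throughout $[t_1,t_2]$, and I would argue by contradiction, assuming that the infimum strictly exceeds $2/(t_2-t_1)$. The pointwise bound $\Upsilon(z)\ge z^2/2$ for $z\ge 0$, together with the substitutions $y(s):=\int_s^{t_2}G(t)^2\,\mathrm{d}t$ and $w(s):=\frac{2}{t_2-t_1}+\frac{1}{2}y(s)$, leads to the Riccati-type inequality $-w'(s)/w(s)^2>1/2$ on $[t_1,t_2]$. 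Integrating from $t_1$ to $t_2$ and using $w(t_2)=2/(t_2-t_1)$ forces $1/w(t_1)<0$, contradicting $w(t_1)\ge 2/(t_2-t_1)>0$.

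Putting the two displayed estimates together and exponentiating produces the claimed Harnack inequality; the case $x_1=x_2$ simply gives $G\equiv 0$ and hence falls under the same framework. The overall scheme mirrors \cite[Section 6]{DKZ}, and the only genuinely delicate point is the Riccati comparison above, which constitutes the main (but standard) obstacle.
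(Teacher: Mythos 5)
Your proof is correct and follows exactly the route the paper sketches: telescope over $s$, drop the $\Psi_\Upsilon(\log u)(\cdot,x_1)$-term, lower-bound $\Psi_\Upsilon(\log u)(\cdot,x_2)$ by $k(x_2,x_1)\Upsilon(G)=\Upsilon(G)$ on $K_n$, and then minimise over $s$ using $\Upsilon(z)\ge z^2/2$. The Riccati comparison you use to establish $\inf_s\big(G(s)-\int_s^{t_2}\Upsilon(G)\,\mathrm{d}t\big)\le 2/(t_2-t_1)$ is precisely the content the paper delegates to \cite[Section 6]{DKZ}, and your case distinction (some $G(s_0)\le 0$ versus $G>0$ throughout), together with $w(t_2)=2/(t_2-t_1)$ forcing $1/w(t_1)<0$, is sound.
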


As described above, in the discrete setting, it is possible to drop the $\Psi_\Upsilon(\log u)(\cdot,x_1)$ term and to only use
the corresponding second term with $x=x_2$. Estimating in this simple way does not work if the state space is continuous.
It turns out that here the proof of a Harnack inequality in the form \eqref{eq:Harnack} is much more involved. 

We now come to a Harnack inequality for positive solutions to the fractional heat equation on $\mathbb{R}^d$. Even though
the result obtained here is not optimal, the proof shows that from the Li-Yau inequality \eqref{eq:fractionalLiYau} one can 
indeed derive Harnack estimates. To the best of the authors' knowledge, this seems to be the first proof of a Harnack inequality
via a Li-Yau type inequality for an evolution equation with a purely non-local diffusion operator in the space-continuous setting.
\begin{theorem} \label{HarnackFracEvol}
Let $\beta\in (0,2)$ and $u:[0,\infty)\times \mathbb{R}^d\to (0,\infty)$ be a strong solution to the fractional heat equation \eqref{eq:FHE}. Then there exists a positive constant $C=C(\beta,d)$ such that for all
$0<t_1<t_2<\infty$ and $x_1,x_2 \in \mathbb{R}^d$ the following Harnack type inequality holds true.
\begin{equation} \label{HarnackUnlg}
u(t_1,x_1)\le u(t_2,x_2) \Big(\frac{t_2}{t_1}\Big)^{C_{LY}}\exp \left(C\left[1+\frac{|x_1-x_2|^{\beta+d}}
{(t_2-t_1)^{1+\frac{d}{\beta}}}\right]\right),
\end{equation}
where $C_{LY}$ is the Li-Yau constant given by \eqref{eq:theconstant}.
\end{theorem}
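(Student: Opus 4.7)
Since $L=-(-\Delta)^{\beta/2}$ for the fractional heat equation, combining the Li--Yau inequality \eqref{eq:fractionalLiYau} with the chain rule identity \eqref{eq:logchainrulesultion} yields the differential Harnack inequality
\begin{equation*}
\partial_t \log u(t,x) \;\ge\; -\frac{C_{LY}}{t} + \Psi_\Upsilon(\log u)(t,x),\qquad (t,x)\in(0,\infty)\times \R^d.
\end{equation*}
Fixing an intermediate time $s\in(t_1,t_2)$ and integrating this inequality at $x=x_1$ on $[t_1,s]$ and at $x=x_2$ on $[s,t_2]$, exactly as in the discrete motivation preceding the theorem, one obtains
\begin{equation*}
\log\frac{u(t_1,x_1)}{u(t_2,x_2)} \;\le\; C_{LY}\log\frac{t_2}{t_1} + A(s),
\end{equation*}
where
\begin{equation*}
A(s):=\log\frac{u(s,x_1)}{u(s,x_2)}-\int_{t_1}^{s}\Psi_\Upsilon(\log u)(t,x_1)\,\mathrm{d}t-\int_{s}^{t_2}\Psi_\Upsilon(\log u)(t,x_2)\,\mathrm{d}t.
\end{equation*}
This already produces the $(t_2/t_1)^{C_{LY}}$ factor of \eqref{HarnackUnlg}; the whole task reduces to showing $A(s)\le C\bigl(1+|x_1-x_2|^{d+\beta}/(t_2-t_1)^{1+d/\beta}\bigr)$ for an appropriate choice of $s$, which is the sole role of the two non-local integrals.

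\textbf{Key estimate.} Write $R=|x_1-x_2|$ and $\tau=t_2-t_1$, and treat first the regime $R\ge \tau^{1/\beta}$; the complementary regime $R<\tau^{1/\beta}$ will be absorbed into the additive constant. I would take $s=(t_1+t_2)/2$ and the ball $B=B(x_1,r)$ with $r\simeq \tau^{1/\beta}\le R/2$. On $B$ one has $|y-x_2|\asymp R$, so dropping the integration outside $B$ in the definition \eqref{eq:PsiPalme} and applying Jensen's inequality to the convex function $\Upsilon$ with respect to the normalised measure $|y-x_2|^{-d-\beta}\,\mathrm{d}y$ restricted to $B$ gives
\begin{equation*}
\Psi_\Upsilon(\log u)(t,x_2) \;\ge\; c_1\,\frac{r^d}{R^{d+\beta}}\,\Upsilon\bigl(\bar w(t)\bigr),\quad \bar w(t):=\frac{1}{|B|}\int_B\bigl(\log u(t,y)-\log u(t,x_2)\bigr)\,\mathrm{d}y,
\end{equation*}
for some $c_1=c_1(\beta,d)>0$. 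A stopping-time style argument then closes the estimate: setting $M:=\bar w(s)_+$, the continuity of $\bar w$ in $t$ forces $\bar w(t)\ge M/2$ on some subinterval of $[s,t_2]$, and combined with $\Upsilon(z)\ge z^2/2$ for $z\ge 0$ this turns the non-local integral into a quantity of size $c_2(r^d\tau/R^{d+\beta})M^2$. Insertion into $A(s)\le \bar w(s)+\text{err}-c_2(r^d\tau/R^{d+\beta})M^2$ produces a concave-in-$M$ bound whose maximal value is of order $R^{d+\beta}/(r^d\tau)\simeq R^{d+\beta}/\tau^{1+d/\beta}$ for the choice $r\simeq\tau^{1/\beta}$; a symmetric treatment using the $\Psi_\Upsilon(\log u)(t,x_1)$ term and a ball around $x_2$ is run in parallel.

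\textbf{Smoothness correction and main obstacle.} The principal technical step, and what I expect to be the main obstacle, is to replace the ball average $\bar w(s)$ by the pointwise difference $\log u(s,x_1)-\log u(s,x_2)$, with an error bounded uniformly and absorbed into the additive constant. Since $u(s,\cdot)=G^{(\beta)}(s,\cdot)\ast u_0$ is smooth, the gradient estimate \eqref{eq:FSgradient} for $\Phi_\beta$ combined with the self-similar scaling \eqref{eq:selfsimilarform} should yield a bound $|\log u(t,y)-\log u(t,x_1)|\lesssim 1$ uniformly on $B(x_1,r)$ whenever $r\simeq \tau^{1/\beta}\lesssim s^{1/\beta}$ and $t\in[s,t_2]$; making this quantitative and uniform in the relevant time range is delicate, since crude convolution estimates tend to depend on $\|u_0\|$. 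A further subtlety is that $\Upsilon(z)\ge z^2/2$ holds only for $z\ge 0$, so if $\bar w(t)$ changes sign on $[s,t_2]$ one must either split the time interval at the sign change or exploit the symmetric bound from the $\Psi_\Upsilon(\log u)(t,x_1)$ term. Once these points are settled, the complementary small-distance regime $R<\tau^{1/\beta}$ follows from the crude estimate obtained by dropping the non-negative $\Psi_\Upsilon$-terms in $A(s)$ and using direct comparison of heat kernels at the parabolic scale, yielding \eqref{HarnackUnlg} with a constant $C=C(\beta,d)$.
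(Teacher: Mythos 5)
Your setup is identical to the paper's: you derive the differential Harnack inequality, integrate along the broken path, and reduce to bounding $A(s)$ for a good choice of $s$. You also correctly flag the central obstacle. Unfortunately, the way you propose to resolve it would not close, for exactly the reason you already half-suspect.

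The sticking point is your ``smoothness correction'' step. You want to replace the ball average $\bar w(s)$ by the pointwise difference $\log u(s,x_1)-\log u(s,x_2)$ and control the error by regularity of $\log u(s,\cdot)$ via gradient bounds on $\Phi_\beta$. This cannot work uniformly: the spatial oscillation of $\log u(t,\cdot)$ at scale $\tau^{1/\beta}$ is not bounded independently of $u_0$ (take $u_0$ an approximate Dirac mass placed near $x_1$; then $\log u(t,\cdot)$ has a large slope near $x_1$ for small $t$). Any bound of the form $|\log u(t,y)-\log u(t,x_1)|\lesssim 1$ on $B(x_1,r)$ is $u_0$-dependent, so it cannot be absorbed into a universal constant $C(\beta,d)$. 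The auxiliary ``stopping-time'' step has the same problem: continuity gives $\bar w\ge M/2$ only on some time interval of indeterminate length $\delta$, and without a $u_0$-independent lower bound on $\delta$ the integral contribution can be arbitrarily small, so the concave-in-$M$ maximisation does not produce a universal bound.

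The paper sidesteps both issues by never comparing the ball average to a point value via regularity. It instead splits the point-to-point increment exactly into two ball-average discrepancies,
\[
v(t,x_1)-v(t,x_2)=\underbrace{\frac{1}{|Q_t|}\int_{Q_t}\big(v(t,x_1)-v(t,y)\big)\,\mathrm{d}y}_{A_1(t)}
+\underbrace{\frac{1}{|Q_t|}\int_{Q_t}\big(v(t,y)-v(t,x_2)\big)\,\mathrm{d}y}_{A_2(t)},
\]
and then uses the two $\Psi_\Upsilon$-integrals asymmetrically: the $\Psi_\Upsilon(v)(\cdot,x_1)$ term is spent on $A_1$ (via $z\le\Upsilon(-z)+1$, so no regularity of $u$ is needed), while the $\Psi_\Upsilon(v)(\cdot,x_2)$ term controls $A_2$ via $\Upsilon(z)\ge z^2/2$ applied to the positive part, followed by the elementary maximisation of $b_1 z-b_2 z^2$. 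Two further ingredients are essential and absent from your plan: the minimum over $s$ is replaced by a weighted time-average with a weight $\eta(t)$ vanishing at both endpoints, and the ball $Q_t$ has a time-dependent radius $r(t)\simeq(t_2-t)^{1/\beta}$; these two choices are tuned precisely so that the combined $\eta(t)A_1(t)-\Psi_\Upsilon(v)(t,x_1)\int_t^{t_2}\eta$ term is nonpositive on $[t_1,t_*)$ and vanishes identically on $[t_*,t_2]$. A fixed $s=t_*$ and a fixed radius $r\simeq\tau^{1/\beta}$ will not produce that cancellation. Also, the sign issue you worried about is resolved in the paper simply by inserting $(\,\cdot\,)_+$, which only improves the upper bound; no interval-splitting is needed. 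Finally, the reduction to $|x_1-x_2|\le 1$ followed by parabolic scaling (which you correctly anticipate) is exactly what the paper does, so the complementary regime is handled the same way.
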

\begin{remark}\label{ClassicalHarnack}
(i) The parabolic Harnack inequality for the space fractional heat equation has already intensively been studied in the literature.
Concerning local solutions, we refer to \cite{BL2} and \cite{CK} for various results in this direction. Here the authors make use of probabilistic methods.
A purely analytic proof, even in a rough setting, has been found in \cite{CLD}. For global solutions, Harnack estimates have
also been derived in \cite[Theorem 8.2]{BoSiVa} and \cite[Theorem 2.7]{DKSZ} by means of heat kernel estimates.
As shown in \cite{BoSiVa}, an important difference to the classical heat equation is that no time gap is required between $t_1$ and $t_2$.

(ii) The Harnack inequality \eqref{HarnackUnlg} is not optimal in various respects. We need a time delay and instead of the exponential function one would expect polynomial terms, cf.\ \cite[Theorem 8.2]{BoSiVa}. Another aspect is the lack of robustness, the constant $C$ blows up as $\beta\to 2$. Nevertheless \eqref{HarnackUnlg} is scale invariant and implies a natural Harnack estimate on different
space-time cylinders as typically obtained for related equations with rough coefficients, cf.\  \cite{CLD}.
\end{remark}
\begin{proof}
As in the previous calculations we have for any $s\in [t_1,t_2]$ that
\begin{align} \label{HarnackStart1}
\log \frac{u(t_1,x_1)}{u(t_2,x_2)} &=
 - \int_{t_1}^s \partial_t \log u(t,x_1) \mathrm{d}t +
\log \frac{u(s,x_1)}{u(s,x_2)} - \int_{s}^{t_2} \partial_t \log u(t,x_2) \mathrm{d}t.
\end{align}
From Theorem \ref{thm:LiYaufrak} and Remark \ref{differentialHarnack} we know that the differential Harnack inequality
\[
\partial_t \log u(t,x)\ge \Psi_\Upsilon(\log u)(t,x)-\frac{C_{LY}(\beta,d)}{t},\quad t>0,\,x\in \iR^d
\]
holds true. Combining this with \eqref{HarnackStart1} we obtain
\begin{align}
\log \frac{u(t_1,x_1)}{u(t_2,x_2)} & \le 
 \int_{t_1}^{t_2}\frac{C_{LY}(\beta,d)}{t} \mathrm{d}t + \log \frac{u(s,x_1)}{u(s,x_2)}\nonumber\\
 & \quad - \int_{t_1}^s \Psi_\Upsilon (\log u) (t,x_1)\mathrm{d}t - \int_{s}^{t_2} \Psi_\Upsilon (\log u) (t,x_2) \mathrm{d}t,
 \;s\in [t_1,t_2].
 \label{HarnackStart2}
\end{align}
So setting $v=\log u$, just like before, we have to find a suitable upper bound for the function 
\begin{equation*}
f(s):=v(s,x_1)-v(s,x_2) - \int_{t_1}^s \Psi_\Upsilon (v) (t,x_1)\mathrm{d}t - \int_{s}^{t_2} \Psi_\Upsilon (v) (t,x_2) \mathrm{d}t,\quad s \in [t_1,t_2], 
\end{equation*} 
We first assume that $|x_1-x_2|\le 1$ and show the general statement by a classical scaling argument later.

Set $t_*=\frac{t_1+t_2}{2}$. We introduce the weight function 
\begin{equation}
\eta(t)=\left\{\begin{array}{ll}
(t-t_1)^\alpha &, t \in [t_1,t_*)\\
(t_2-t)^\alpha &, t \in [ t_*,t_2],
\end{array}\right. 
\end{equation}
with fixed parameter $\alpha>\frac{1}{2}\max\{0,\frac{d}{\beta}-1\}$, e.g.\ $\alpha=\frac{d}{\beta}$.
Employing Fubini's theorem, we observe that
\begin{align}
\min\limits_{t \in [t_1,t_2]} f(t) &\leq \frac{1}{\int_{t_1}^{t_2} \eta(t) \mathrm{d}t} \int_{t_1}^{t_2} \eta(t)f(t)\mathrm{d}t
\nonumber\\
&= \frac{1}{\int_{t_1}^{t_2} \eta(t) \mathrm{d}t} \int_{t_1}^{t_2}\Big( \eta(t) \big(v(t,x_1)-v(t,x_2)\big) - \Psi_\Upsilon(v)(t,x_1) \int_t^{t_2}\eta(\tau)\mathrm{d}\tau \nonumber\\
&\qquad\qquad\qquad\qquad- \Psi_\Upsilon(v)(t,x_2) \int_{t_1}^{t}\eta(\tau)\mathrm{d}\tau \Big)\mathrm{d}t\nonumber\\
&= \frac{1}{\int_{t_1}^{t_2} \eta(t) \mathrm{d}t}\Big[ \int_{t_1}^{t_2}\Big( \eta(t) A_1(t) - \Psi_\Upsilon(v)(t,x_1) \int_t^{t_2}\eta(\tau)\mathrm{d}\tau\Big)\mathrm{d}t \nonumber\\
&\qquad\qquad\qquad+ \int_{t_1}^{t_2} \Big(\eta(t)A_2(t)- \Psi_\Upsilon(v)(t,x_2) \int_{t_1}^{t}\eta(\tau)\mathrm{d}\tau \Big)\mathrm{d}t\Big],\label{fmin}
\end{align}
where 
\begin{align*}
A_1(t) = \frac{1}{|Q_t|}\int_{Q_t} \big(v(t,x_1)-v(t,y)\big)\mathrm{d}y,\quad
A_2(t)  = \frac{1}{|Q_t|}\int_{Q_t}\big( v(t,y)-v(t,x_2)\big)\mathrm{d}y
\end{align*}
for $t \in (t_1,t_2)$, and $Q_t$ is the open ball $Q_t=B_{r(t)}(x_1)$ with radius
\begin{equation*}
r(t)=
\Big(\frac{\omega_d c_{\beta,d}}{1+\alpha}(t_2-t)\Big)^\frac{1}{\beta}
\end{equation*}
and volume
\begin{equation}\label{eq:Qtvolume}
|Q_t|=\omega_d r(t)^d= \omega_d^{\frac{d}{\beta}+1} \Big(\frac{c_{\beta,d}}{1+\alpha}(t_2-t)\Big)^\frac{d}{\beta}.
\end{equation}

Using the inequality $z \leq \Upsilon(-z)+1$, valid for any $z \in \mathbb{R}$, we may estimate
\begin{align*}
A_1(t)\leq 1+ \frac{1}{|Q_t|}\int_{Q_t} \Upsilon\big(v(t,y)-v(t,x_1)\big)\mathrm{d}y \leq  1+\frac{c_{\beta,d}(t_2-t)}{1+\alpha}\int_{Q_t}\frac{\Upsilon(v(t,y)-v(t,x_1))}{|y-x_1|^{d+\beta}}\mathrm{d}y.
\end{align*}
Consequently, we observe that
\begin{align*}
&\frac{1}{\int_{t_1}^{t_2} \eta(t) \mathrm{d}t} \int_{t_1}^{t_2}\Big( \eta(t) A_1(t) - \Psi_\Upsilon(v)(t,x_1) \int_t^{t_2}\eta(\tau)\mathrm{d}\tau\Big)\mathrm{d}t \\
&\qquad\qquad \leq 1 +\frac{c_{\beta,d}}{\int_{t_1}^{t_2} \eta(t) \mathrm{d}t} \int_{t_1}^{t_2}\Big( \frac{\eta(t)(t_2-t)}{1+\alpha}-\int_t^{t_2}\eta(\tau)\mathrm{d}\tau \Big) \int_{Q_t} \frac{\Upsilon(v(t,y)-v(t,x_1))}{|y-x_1|^{d+\beta}}\mathrm{d}y \,\mathrm{d}t.
\end{align*}
If $t \in [t_*, t_2 ]$ then
\begin{equation}\label{eq:factorforA1}
 \frac{\eta(t)(t_2-t)}{1+\alpha}-\int_t^{t_2}\eta(\tau)\mathrm{d}\tau  = 0.
\end{equation}
If instead $t \in [ t_1, t_*)$ one readily checks that the left-hand side of \eqref{eq:factorforA1} is increasing in $t$, so that we conclude
\begin{equation} \label{A1Estimate}
\frac{1}{\int_{t_1}^{t_2} \eta(t) \mathrm{d}t} \int_{t_1}^{t_2}\Big( \eta(t) A_1(t) - \Psi_\Upsilon(v)(t,x_1) \int_t^{t_2}\eta(\tau)\mathrm{d}\tau\Big)\mathrm{d}t\leq 1.
\end{equation}

Next, denoting by $z_+$ the positive part of $z\in \iR$ and using that $\Upsilon(z)\ge \frac{1}{2}z^2$ for all $z\ge 0$, we estimate the integral term involving $A_2(t)$ as follows.
\begin{align*}
&\int_{t_1}^{t_2}\Big( \eta(t)A_2(t)- \Psi_\Upsilon(v)(t,x_2) \int_{t_1}^{t}\eta(\tau)\mathrm{d}\tau\Big) \mathrm{d}t\\
&\qquad\qquad\leq \int_{t_1}^{t_2}\int_{Q_t} \frac{\eta(t)(v(t,y)-v(t,x_2))}{|Q_t|}-\frac{c_{\beta,d}\Upsilon(v(t,y)-v(t,x_2))\int_{t_1}^{t}\eta(\tau)\mathrm{d}\tau}{|y-x_2|^{d+\beta}}\mathrm{d}y\mathrm{d}t\\
&\qquad\qquad\leq \int_{t_1}^{t_2}\int_{Q_t} \frac{\eta(t)(v(t,y)-v(t,x_2))_+}{|Q_t|}-\frac{c_{\beta,d}\Upsilon\big((v(t,y)-v(t,x_2))_+\big)\int_{t_1}^{t}\eta(\tau)\mathrm{d}\tau}{|y-x_2|^{d+\beta}} \mathrm{d}y\mathrm{d}t\\
&\qquad\qquad\leq \int_{t_1}^{t_2}\int_{Q_t}\frac{\eta(t)(v(t,y)-v(t,x_2))_+}{|Q_t|}-\frac{c_{\beta,d}\big((v(t,y)-v(t,x_2))_+\big)^2\int_{t_1}^{t}\eta(\tau)\mathrm{d}\tau}{2|y-x_2|^{d+\beta}} \mathrm{d}y\mathrm{d}t\\
&\qquad\qquad\leq \int_{t_1}^{t_2} \int_{Q_t} \frac{|y-x_2|^{d+\beta}\eta(t)^2}{2c_{\beta,d}|Q_t|^2 \int_{t_1}^{t}\eta(\tau)\mathrm{d}\tau}\mathrm{d}y\mathrm{d}t,
\end{align*}
where in the last step we used that $\max_{z \in \mathbb{R}}(b_1z-b_2z^2)= \frac{b_1^2}{4b_2}$ for constants $b_1 \in \mathbb{R}$ and $b_2>0$. 

Using the inequality $(a+b)^p\le 2^{p-1}(a^p+b^p)$, $a,b\ge 0$, $p\ge 1$, and our assumption that $|x_1-x_2|\leq 1$ we have
\begin{align*}
\int_{Q_t} |y-x_2|^{d+\beta}\mathrm{d}y &\leq 2^{d+\beta-1} \Big(\int_{Q_t} |y-x_1|^{d+\beta}\mathrm{d}y +  |Q_t|\Big)\\
&\le  2^{d+\beta-1} \big(r(t)^{d+\beta}|Q_t|+|Q_t|\big).
\end{align*}
We therefore obtain
\[
\int_{t_1}^{t_2}\Big( \eta(t)A_2(t)- \Psi_\Upsilon(v)(t,x_2) \int_{t_1}^{t}\eta(\tau)\mathrm{d}\tau\Big) \mathrm{d}t
\le \frac{2^{d+\beta-2}}{c_{\beta,d}}\int_{t_1}^{t_2} \frac{\eta(t)^2\big(1+r(t)^{d+\beta}\big)}{|Q_t| \int_{t_1}^{t}\eta(\tau)\mathrm{d}\tau}\,\mathrm{d}t.
\]
For $t \in [t_1, t_*]$ we observe that
\begin{align*}
 \frac{\eta(t)^2\big(1+r(t)^{d+\beta}\big)}{|Q_t| \int_{t_1}^{t}\eta(\tau)\mathrm{d}\tau}
= \frac{(t-t_1)^{2\alpha} \big(1+r(t)^{d+\beta}\big)(1+\alpha)}{\omega_d r(t)^d (t-t_1)^{1+\alpha}}
\le \frac{1+\alpha}{\omega_d} (t-t_1)^{\alpha-1}\Big(\frac{1}{r(t_*)^d}+r(t_1)^\beta\Big)
\end{align*}
whereas for $t \in (t_*, t_2]$ we have
\begin{align*}
 \frac{\eta(t)^2\big(1+r(t)^{d+\beta}\big)}{|Q_t| \int_{t_1}^{t}\eta(\tau)\mathrm{d}\tau}
& \le  \frac{(t_2-t)^{2\alpha}  \big(1+r(t)^{d+\beta}\big)}{\omega_d r(t)^d  \int_{t_1}^{t_*}\eta(\tau)\mathrm{d}\tau}
\le  \frac{1+\alpha}{\omega_d}\frac{ (t_2-t)^{2\alpha} }{(t_*-t_1)^{1+\alpha}}\Big(\frac{1}{r(t)^d}+r(t_*)^\beta\Big)\\
& =  \frac{1+\alpha}{\omega_d}\frac{ (t_2-t)^{2\alpha} }{(t_*-t_1)^{1+\alpha}}
\Big[\Big(\frac{1+\alpha}{\omega_d c_{\beta,d}}\Big)^{\frac{d}{\beta}} (t_2-t)^{-\frac{d}{\beta}}+r(t_*)^\beta\Big].
\end{align*}
Note that $2\alpha-\frac{d}{\beta}>-1$, by our choice of $\alpha$. Consequently,
\begin{align*}
\int_{t_1}^{t_2}  \frac{\eta(t)^2\big(1+ r(t)^{d+\beta}\big)}{|Q_t| \int_{t_1}^{t}\eta(\tau)\mathrm{d}\tau}\,\mathrm{d}t
&= \int_{t_1}^{t_*}\ldots\,\mathrm{d}t+
\int_{t_*}^{t_2} \ldots\,\mathrm{d}t\\
&\le \frac{(1+\alpha)(t_*-t_1)^\alpha}{\alpha \omega_d} \Big(\frac{1}{r(t_*)^d}+r(t_1)^\beta\Big)\\
& \quad +\frac{(1+\alpha)}{\omega_d} \Big(\frac{(1+\alpha)^{\frac{d}{\beta}}(t_2-t_*)^{\alpha-\frac{d}{\beta}}}
{(2\alpha-\frac{d}{\beta}+1)(\omega_d c_{\beta,d})^{\frac{d}{\beta}}}+\frac{(t_2-t_*)^\alpha}{2\alpha+1}r(t_*)^\beta
\Big)\\
&\le \frac{(1+\alpha)^{1+\frac{d}{\beta}}(t_*-t_1)^{\alpha-\frac{d}{\beta}}}{\alpha \omega_d^{1+\frac{d}{\beta}}
c_{\beta,d}^{\frac{d}{\beta}}}+\frac{2c_{\beta,d}}{\alpha}(t_*-t_1)^{1+\alpha}\\
& \quad+ \frac{(1+\alpha)^{1+\frac{d}{\beta}}(t_2-t_*)^{\alpha-\frac{d}{\beta}}}
{(2\alpha-\frac{d}{\beta}+1)\omega_d^{1+\frac{d}{\beta}} c_{\beta,d}^{\frac{d}{\beta}}}
+\frac{c_{\beta,d}}{2\alpha+1}(t_2-t_*)^{1+\alpha}.
\end{align*}
Since $t_2-t_*=t_*-t_1=\frac{1}{2}(t_2-t_1)$ and
\[
\int_{t_1}^{t_2}\eta(t)\,\mathrm{d}t=2\int_{t_1}^{t_*}\eta(t)\,\mathrm{d}t=\frac{2}{1+\alpha}(t_*-t_1)^{1+\alpha},
\]
it follows from the previous estimates that
\begin{align}
\frac{1}{\int_{t_1}^{t_2} \eta(t) \mathrm{d}t}& \int_{t_1}^{t_2}\Big( \eta(t)A_2(t)- \Psi_\Upsilon(v)(t,x_2) \int_{t_1}^{t}\eta(\tau)\mathrm{d}\tau\Big) \mathrm{d}t\nonumber\\
& \le M(\alpha,d,\beta)\Big(1+(t_2-t_1)^{-1-\frac{d}{\beta}}\Big),\label{HarnackEnd}
\end{align}
where the constant $M(\alpha,d,\beta)$ can be specified explicitly.

We now choose $s$ in \eqref{HarnackStart2} such that $f(s)=\min_{t\in [t_1,t_2]} f(t)$.
Using \eqref{fmin}, \eqref{A1Estimate} and \eqref{HarnackEnd} we obtain
\begin{align*}
\log \frac{u(t_1,x_1)}{u(t_2,x_2)} \le C_{LY}\log\big( \frac{t_2}{t_1}\big)+1+M(\alpha,d,\beta)\Big(1+(t_2-t_1)^{-1-\frac{d}{\beta}}\Big),
\end{align*}
and thus
\[
u(t_1,x_1)\le u(t_2,x_2) \Big(\frac{t_2}{t_1}\Big)^{C_{LY}}\exp\Big(1+M(\alpha,d,\beta)\Big[1+(t_2-t_1)^{-1-\frac{d}{\beta}}\Big]\Big),
\]
which implies the assertion in the case $|x_1-x_2|\le 1$. The general case follows from a classical scaling argument using that for any $\lambda>0$ the function $u(\lambda^\beta t, \lambda x)$ is a solution to the fractional heat equation on 
$(0,\infty)\times \iR^d$ if and only if $u(t,x)$ enjoys this property.
\end{proof}

{\bf Declarations:} On behalf of all authors, the corresponding author states that there is no conflict of interest. Data availability
statement: not applicable.


\end{document}